\newtheorem{theorem}{Theorem}
\newtheorem{lemma}{Lemma}
\newtheorem{proposition}{Proposition}
\newtheorem{corollary}{Corollary}
\theoremstyle{definition}
\theoremstyle{remark}
\newtheorem{remark}{Remark}
\newcommand{\ind}{\operatorname{ind}}
\newcommand{\dd}{\mathfrak{D}}
\newcommand{\hh}{\mathcal{H}}
\newcommand{\mix}{\mathcal{M}}
\newcommand{\prop}{\mathcal{P}}
\newcommand{\qprop}{\mathcal{Q}}
\newcommand{\ch}{{\bf 1}}
\newcommand{\ii}{\mathcal{I}}
\newcommand{\Ker}{\mathfrak{K}}
\newcommand{\bs}{\backslash}
\newcommand{\Hom}{\operatorname{Hom}}
\newcommand{\diag}{\operatorname{diag}}
\newcommand{\zz}{\mathbb{Z}}
\newcommand{\cc}{\mathbb{C}}
\newcommand{\qq}{\mathbb{Q}}
\newcommand{\A}{\mathbb{A}}
\newcommand{\Ind}{\operatorname{Ind}}
\newcommand{\sprod}[2]{\left\langle#1,#2\right\rangle}
\address{Humboldt-Universit\"at zu Berlin, Mathematich-Naturwissenschaftliche Fakult\"at II,
Institut f\"ur Mathematik, Sitz: Rudower Chausee 25, D-10099 Berlin,
Germany.} \email{offen@mathematik.hu-berlin.de}
\address{Einstein Institute of Mathematics,
Edmond J. Safra Campus, Givat Ram, The Hebrew University of
Jerusalem, Jerusalem, 91904, Israel} \email{sayag@math.huji.ac.il}
\begin{document}

\begin{abstract}
We show the uniqueness and disjointness of Klyachko models for
$GL_n$ over a non-archimedean local field. This completes, in
particular, the study of Klyachko models on the unitary dual. Our
local results imply a global rigidity property for the discrete
automorphic spectrum.
\end{abstract}

\author{Omer Offen}
\author{Eitan Sayag}
\title{Uniqueness and disjointness of Klyachko models}
\date{\today}

\maketitle

\section{Introduction}\label{intro}
In this work we show that over a local non-archimedean field, the
mixed (symplectic-Whittaker) models introduced by Klyachko in
\cite{kly} are disjoint and that multiplicity one is satisfied. In
\cite{os} we showed, over a $p$-adic field (a finite extension of
$\qq_p$), the existence of Klyachko models for unitary
representations. The up shot is then that for every irreducible,
unitary representation of $GL_n$ over a $p$-adic field there is a
unique Klyachko model where it appears and it appears there with
multiplicity one.

To formulate the main result more precisely we introduce some
notation. Let $F$ be a non-archimedean local field. For a positive
integer $r$, denote by $U_r$ the subgroup of upper triangular
unipotent matrices in $GL_r$ and let
\[
    Sp_{2k}=\{g \in GL_{2k}:{}^t gJ_{2k} g=J_{2k}\}
\]
where
\[
 J_{2k}=
    \left(
\begin{array}{cc}
  0  & w_k   \\
  -w_k  & 0   \\
\end{array}
\right)
\]
and $w_k \in GL_k(F)$ is the matrix with $(i,j)^{th}$ entry equal to
$\delta_{i,n+1-j}.$ Whenever $n=r+2k$ we consider the subgroup
$H_{r,2k}$ of $GL_n$ defined by
\[
    H_{r,2k} =\{
\left(
\begin{array}{cc}
  u  & X   \\
  0  &  h  \\
\end{array}
\right): u \in U_r,\, X \in M_{r \times 2k},\,h \in Sp_{2k}\}.
\]
Let $\psi$ be a non trivial character of $F.$ For $u=(u_{i,j}) \in
U_r(F)$ we set
\begin{equation}\label{eq: gen char}
    \psi_r(u)=\psi(u_{1,2}+\cdots+u_{r-1, r}).
\end{equation}
Let $\psi_{r,2k}$ be the character of $H_{r,2k}(F)$ defined by
\begin{equation}\label{eq: char on H}
    \psi_{r,2k}
    \left(
        \begin{array}{cc}
        u  & X   \\
        0  &  h  \\
        \end{array}
    \right)=\psi_r(u).
\end{equation}
When $n=r+2k$ the space
\[
\mix_{r,2k}=\mbox{Ind}_{H_{r,2k}(F)}^{GL_{n}(F)}(\psi_{r})
\]
is referred to as a Klyachko model and we say that a representation
$\pi$ of $GL_{n}(F)$ admits the Klyachko model $\mix_{r,2k}$ if
$\Hom_{GL_n(F)}(\pi,\mix_{r,2k})\ne 0.$ Here $\Ind$ denotes the
functor of non-compact smooth induction and representations of
$GL_n(F)$ are always assumed to be smooth. The main result of this
paper is the following.
\begin{theorem}\label{thm:uniq-disj}
Let $F$ be a non-archimedean local field and let $\pi$ be an
irreducible representation of $GL_n(F)$ then
\begin{equation}\label{eq: mult one}
\dim_\cc(\Hom_{GL_n(F)}(\pi,\mathop{\oplus}\limits_{k=0}^{[\frac
n2]}\mix_{n-2k,2k}))\le 1.
\end{equation}
\end{theorem}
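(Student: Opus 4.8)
The plan is to run the Gelfand--Kazhdan/Bernstein method: convert the bound on $\Hom$-spaces into a statement about invariant distributions on $GL_n(F)$, and prove that statement by a geometric analysis of orbits. Since the target in \eqref{eq: mult one} is the direct sum $\bigoplus_k\mix_{n-2k,2k}$, the single inequality simultaneously encodes both multiplicity one for each individual Klyachko model and the disjointness of distinct ones, and I would obtain both from one distribution theorem. By Frobenius reciprocity for non-compact smooth induction one has $\Hom_{GL_n(F)}(\pi,\mix_{r,2k})\cong\Hom_{H_{r,2k}(F)}(\pi,\psi_{r,2k})$, so the left-hand side of \eqref{eq: mult one} equals $\bigoplus_{k=0}^{[n/2]}\Hom_{H_{n-2k,2k}(F)}(\pi,\psi_{n-2k,2k})$. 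I would then fix an anti-involution $\sigma$ of $GL_n(F)$ adapted to the Klyachko data --- built from the transpose together with the matrices $w_r$ and $J_{2k}$, so that on the upper-left block it induces the Gelfand--Kazhdan involution of $U_r$ (which fixes $\psi_r$) and on the lower-right block it induces inversion on $Sp_{2k}$ --- and invoke two standard facts: (i) $\widetilde\pi\cong\pi^\sigma$ for every irreducible $\pi$, by the Gelfand--Kazhdan theorem for $GL_n$; and (ii) the self-duality of Klyachko models, namely that $\widetilde\pi$ admits $\mix_{r,2k}$ with the same multiplicity as $\pi$ --- which follows from the factorization $\mix_{r,2k}\cong\Ind_P^{GL_n}\bigl(\Ind_{U_r}^{GL_r}\psi_r\otimes\Ind_{Sp_{2k}}^{GL_{2k}}\triv\bigr)$, with $P$ the $(r,2k)$-parabolic, together with the insensitivity of parabolic induction, up to semisimplification, to reordering the two blocks. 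The Gelfand--Kazhdan bilinear-form construction then embeds $\Hom_{H_{n-2k,2k}(F)}(\pi,\psi_{n-2k,2k})\otimes\Hom_{H_{n-2k',2k'}(F)}(\widetilde\pi,\psi_{n-2k',2k'})$ into the space $\dd_{k,k'}$ of distributions on $GL_n(F)$ that transform by $\psi_{n-2k,2k}$ under left translation by $H_{n-2k,2k}(F)$ and by $\psi_{n-2k',2k'}$ under right translation by $H_{n-2k',2k'}(F)$ (with the appropriate modular normalization); so, granting (i)--(ii), \eqref{eq: mult one} reduces to proving (a) $\dd_{k,k'}=0$ whenever $k\ne k'$ (this yields the disjointness of distinct Klyachko models), and (b) every distribution in $\dd_{k,k}$ is $\sigma$-invariant (this yields multiplicity one).

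Both (a) and (b) I would attack by Bernstein's localization principle applied to the action of $H_{n-2k,2k}\times H_{n-2k',2k'}$ on $GL_n(F)$ by $(h_1,h_2)\cdot g=h_1\,g\,\sigma(h_2)$, an action for which, via the self-duality (ii), $\sigma$ descends to the symmetry interchanging the two factors --- so that $\sigma$-invariance is the natural ``bi-invariance'' on the orbit space. One stratifies $GL_n(F)$ by the orbits of this action --- a Bruhat-type decomposition now entangled with the two symplectic blocks --- and, by Frobenius descent on each orbit, reduces to the assertion that an orbit $\oo$ carries a nonzero $\psi_{n-2k,2k}\boxtimes\overline{\psi_{n-2k',2k'}}$-equivariant distribution only when $k=k'$ and $\oo$ is $\sigma$-stable, in which case a standard positivity argument forces the equivariant distribution on $\oo$ to be $\sigma$-invariant. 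The vanishing on a given orbit reduces, as usual, to the non-triviality of $\psi_{n-2k,2k}\boxtimes\overline{\psi_{n-2k',2k'}}$ on the unipotent radical of the stabilizer of a point of $\oo$ (the higher-order distributions supported on $\oo$ being controlled by the same mechanism). To keep the stratification and the stabilizer computations manageable, I would induct on $n$, peeling the Whittaker part off $\psi_{n-2k,2k}$ by restricting to the mirabolic subgroup of $GL_n$ and applying the Bernstein--Zelevinsky functors; this reduces the analysis orbit-by-orbit to Klyachko data for $GL_m$ with $m<n$ together with the interaction of the symplectic blocks with the remaining flag directions, and Bernstein's principle then upgrades the orbit-wise statements to the claim about all of $\dd_{k,k'}$.

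The main obstacle is exactly this orbit analysis. One needs a workable parametrization of the $H_{n-2k,2k}\times H_{n-2k',2k'}$ double cosets in $GL_n(F)$ and, for each, either a one-parameter unipotent subgroup of the stabilizer on which the mixed character is nontrivial --- which kills the cross terms ($k\ne k'$) altogether and kills all but the $\sigma$-stable orbits when $k=k'$ --- or a direct verification that the surviving orbit is $\sigma$-stable. The genuine difficulty is the interplay of two cancellation mechanisms: the Whittaker characters force root-subgroup identities of Gelfand--Kazhdan type, while the symplectic groups $Sp_{2k}$ and $Sp_{2k'}$ --- and especially the possibility $k\ne k'$ --- impose constraints reflecting how an alternating form on part of the space sits relative to the flag. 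Showing that these constraints are jointly unsatisfiable off the diagonal, and on the diagonal single out precisely one symmetric orbit, is where essentially all the content lies. A secondary but real technical point is arranging the stratification so that Bernstein's localization applies at all, i.e.\ controlling distributions that are not supported on a single orbit; this is the role of the inductive mirabolic/Bernstein--Zelevinsky device, in place of a naive (and in fact false) finiteness of the orbit space.
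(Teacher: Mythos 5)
Your overall strategy matches the paper's: reduce the $\Hom$ bound to a vanishing statement for equivariant distributions on $GL_n(F)$, localize to orbits of $H_{r,2k}\times H_{r',2k'}$ acting via left/right translation twisted by transpose, and prove the required non-triviality of the character on stabilizers. The paper packages the disjointness ($k\ne k'$) and the uniqueness ($k=k'$) cases together by enlarging $\hh=H_{r,2k}\times\overline H_{r',2k'}$ to $\widetilde\hh=\hh\rtimes\{\pm1\}$ and working with a single character $\widetilde\theta$, rather than stating (a) and (b) separately as you do; this is a bookkeeping simplification, and it also absorbs your ``self-duality of Klyachko models'' step (ii), which you otherwise have to argue via parabolic induction --- an argument you leave vague, and which anyway is not needed if one works with distributions directly as the paper does.

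Two genuine issues. First, the device you propose for the induction --- peeling off Whittaker directions by restricting to the mirabolic subgroup and applying Bernstein--Zelevinsky derivative functors --- is not what is needed, and is not what the paper does. The infinitude of the orbit space is handled not by BZ derivatives but by the constructibility/stratification machinery of Bernstein--Zelevinsky \cite[Theorem 6.9 and \S 6.15]{MR0425030}: one shows the action of $\widetilde\hh$ on $G$ is constructible and then vanishing on each orbit (in the form that $\widetilde\theta\ne\delta_{\widetilde\hh_g}$ on every stabilizer) forces vanishing globally. The induction on $n$ in the paper lives entirely at the level of the geometric statement about stabilizers, via the Bruhat decomposition $G=\bigsqcup_w Pw\overline{P'}$ with $P=P_{(1^{(r)},2k)}$, $P'=P_{(1^{(r')},2k')}$; cells with a nonempty ``shared Whittaker index set'' $I_w$ reduce to a smaller $GL_i$ Whittaker statement together with a smaller Klyachko statement, and the closed cell ($I_w=\emptyset$) requires the separate symplectic-alternating-form analysis. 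Second, and more importantly, the actual orbit analysis is absent. The heart of the paper is the explicit criterion $\prop(g,r,r')$, a normal-form lemma bringing elements of the closed cell to $\diag$-plus-symplectic shape (using a lemma of Klyachko on unipotents in $Sp$ and the Goldstein--Guralnick/Heumos--Rallis fact that every $g\in GL_{2k}$ is $Sp\times Sp$-equivalent to its transpose), and the verification that either the Whittaker or the symplectic mechanism produces a unipotent in the stabilizer on which the character is nontrivial. You identify this as ``where essentially all the content lies'' but give no indication of how to carry it out, so the proposal is a plan whose central step is unfulfilled.
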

Denote by
\[
m_\pi=\dim_\cc(\Hom_{GL_n(F)}(\pi,\mathop{\oplus}\limits_{k=0}^{[\frac
n2]}\mix_{n-2k,2k}))
\]
the multiplicity of $\pi$ in the direct sum of the Klyachko models.
When $F$ is a finite field, it is proved in \cite{MR1129515} that
$m_\pi=1$ for every irreducible representation $\pi$ of $GL_n(F).$
When $F$ is a non-archimedean local field it is shown in
\cite{MR1078382} that there exists an irreducible representation
$\pi$ of $GL_3(F)$ so that $m_\pi=0.$ Thus, we cannot expect in
general for the inequality \eqref{eq: mult one} to be an equality.
However, in \cite{os} we showed that if $F$ is a $p$-adic field then
$m_\pi \ge 1$ for every irreducible, unitary representation $\pi$ of
$GL_n(F).$ We therefore have the following.
\begin{corollary}\label{cor: unitary mult one}
Let $F$ be a $p$-adic field and let $\pi$ be an irreducible, unitary
representation of $GL_n(F)$ then
\[
\dim_\cc(\Hom_{GL_n(F)}(\pi,\mathop{\oplus}\limits_{k=0}^{[\frac
n2]}\mix_{n-2k,2k}))= 1.
\]
\end{corollary}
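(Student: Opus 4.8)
The plan is to deduce the corollary formally by combining Theorem~\ref{thm:uniq-disj} with the existence of Klyachko models for irreducible unitary representations over a $p$-adic field, which was established in \cite{os}. Recall the notation $m_\pi$ for the multiplicity of $\pi$ in the direct sum of the Klyachko models. Theorem~\ref{thm:uniq-disj}, applied to the irreducible representation $\pi$, gives at once the upper bound $m_\pi \le 1$. For the matching lower bound I would invoke the main result of \cite{os}: when $F$ is $p$-adic and $\pi$ is irreducible and unitary, there exists an integer $k$ with $0 \le k \le [\frac n2]$ such that $\Hom_{GL_n(F)}(\pi, \mix_{n-2k,2k}) \ne 0$. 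Since this space embeds into the Hom-space defining $m_\pi$, we obtain $m_\pi \ge 1$. Combining the two inequalities yields $m_\pi = 1$, which is precisely the assertion of the corollary.

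The only point requiring attention is that the two inputs are phrased in compatible terms, and they are: Theorem~\ref{thm:uniq-disj} is stated for an arbitrary non-archimedean local field and an arbitrary irreducible smooth representation, so in particular it applies to an irreducible unitary $\pi$ over a $p$-adic field; and the existence theorem of \cite{os} is formulated directly in terms of the models $\mix_{r,2k}$ used here, so no translation is needed. I do not expect any genuine obstacle in the corollary itself: all of the content lies in Theorem~\ref{thm:uniq-disj} (the upper bound, which is the subject of this paper) and in \cite{os} (the lower bound, proved there), and the corollary is simply their conjunction.
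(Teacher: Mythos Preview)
Your proposal is correct and matches the paper's own argument: the corollary is deduced immediately from Theorem~\ref{thm:uniq-disj} (giving $m_\pi\le 1$) together with the existence result $m_\pi\ge 1$ for irreducible unitary representations over a $p$-adic field proved in \cite{os}. There is no additional content beyond combining these two inputs.
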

By Frobenius receiprocity \cite[\S2.28]{MR0425030} for a
representation $\pi$ of $GL_n(F)$ we have
\begin{equation}\label{eq: frob}
    \Hom_{GL_n(F)}(\pi,\mix_{r,2k})=\Hom_{H_{r,2k}(F)}(\pi,\psi_r).
\end{equation}
It follows that for an irreducible, unitary representation $\pi$
of $GL_n(F)$ there is a unique integer $0 \le \kappa(\pi)\le [\frac
n2]$ such that
\[
    \Hom_{H_{n-2\kappa(\pi),2\kappa(\pi)}(F)}(\pi,\psi_{n-2\kappa(\pi),2\kappa(\pi)})\ne
    0,
\]
i.e. such that $\pi$ is
$(H_{n-2\kappa(\pi),2\kappa(\pi)},\psi_{n-2\kappa(\pi),2\kappa(\pi)})$-\emph{distinguished}
and that the space of such functionals is one dimensional.
\begin{remark}
In \cite[Theorem 1]{mix}, when $F$ is a $p$-adic field and $n$ is
even, we exhibited a family of irreducible, unitary representations
of $GL_n(F)$ that are $Sp_n(F)$-distinguished. We promised in
\cite{mix} that in \cite{os} we will show that this family exhausts
all irreducible, unitary representations that are
$Sp_n(F)$-distinguished. Eventually, we postponed the delivery of
this statement to the current paper. It is immediate from Corollary
\ref{cor: unitary mult one}.
\end{remark}

In \cite{os} we also studied globally over a number field, the mixed
(symplectic-Whittaker) periods on the discrete automorphic spectrum
of $GL_n.$ Let $F$ be a number field and let $\psi$ be a non-trivial
character of $F \bs \A_F.$ We use \eqref{eq: gen char} to view
$\psi_r$ as a character of $U_r(\A_F)$ and \eqref{eq: char on H} to
view $\psi_{r,2k}$ as a character of $H_{r,2k}(\A_F).$ For an
automorphic form $\phi$ in the discrete spectrum automorphic
spectrum of $GL_n(\A_F)$ and a decomposition $n=r+2k$ we consider
the mixed period integral
\begin{equation}\label{period}
    P_{r,2k}(\phi)=\int_{H_{r,2k}(F)\bs
    H_{r,2k}(\A_F)}
    \phi(h)\,\psi_{r,2k}(h)\ dh.
\end{equation}
We say that an irreducible, discrete spectrum automorphic
representation $\pi$ of $GL_n(\A_F)$ is
$H_{r,2k}$-\emph{distinguished} if $P_{r,2k}$ is not identically
zero on the space of $\pi$. In \cite{os} we provided an explicit
integer $0 \le \kappa(\pi)\le [\frac n2]$ such that $\pi$ is
$(H_{n-2\kappa(\pi),2\kappa(\pi)},\psi_{n-2\kappa(\pi),2\kappa(\pi)})$-\emph{distinguished}.
Furthermore, we showed that this period integral is factorizable.
Corollary \ref{cor: unitary mult one} (particularly, the
disjointness of Klyachko models) then shows that there is a unique
such integer. Furthermore, it implies the following rigidity
property of the discrete automorphic spectrum of $GL_n.$
\begin{theorem}
Let $F$ be a number field and let $\pi=\otimes_v \pi_v$ be an
irreducible, discrete spectrum automorphic representation of
$G(\A_F)$. Then the following are equivalent:
\begin{enumerate}
\item {$\pi$ is $(H_{r,2k},\psi_r)-$distinguished;}

\item{$\pi_v$ is $(H_{r,2k},\psi_r)-$distinguished for all places
$v$ of $F$;}

\item{$\pi_{v_0}$ is $(H_{r,2k},\psi_r)-$distinguished for some finite
place $v_0$ of $F$.}
\end{enumerate}
\end{theorem}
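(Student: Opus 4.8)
The plan is to deduce this global rigidity statement formally from the local Theorem~\ref{thm:uniq-disj} together with two facts already available in \cite{os}: that the mixed period \eqref{period} is factorizable, and that the discrete-spectrum representation $\pi$ is $(H_{n-2\kappa(\pi),2\kappa(\pi)},\psi_{n-2\kappa(\pi),2\kappa(\pi)})$-distinguished for the explicit integer $\kappa(\pi)$ produced there. No further local or global analysis should be needed; the role of Theorem~\ref{thm:uniq-disj} is precisely to supply the local disjointness that was missing when \cite{os} was written.

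For $(1)\Rightarrow(2)$ I would use the factorization of $P_{r,2k}$: for a pure tensor $\phi=\otimes_v\phi_v$ the adelic integral \eqref{period} equals, after the usual unramified normalization, an Euler product $\prod_v\ell_v(\phi_v)$ with each $\ell_v\in\Hom_{H_{r,2k}(F_v)}(\pi_v,\psi_r)$, so if $P_{r,2k}$ is not identically zero on $\pi$ then no $\ell_v$ can be identically zero; by Frobenius reciprocity \eqref{eq: frob} this is exactly the assertion that $\pi_v$ is $(H_{r,2k},\psi_r)$-distinguished at every place $v$. The implication $(2)\Rightarrow(3)$ is immediate. For $(3)\Rightarrow(1)$, suppose $\pi_{v_0}$ is $(H_{r,2k},\psi_r)$-distinguished at a finite place $v_0$. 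By \cite{os}, $\pi$ is globally $(H_{n-2\kappa(\pi),2\kappa(\pi)},\psi)$-distinguished, so by the implication $(1)\Rightarrow(2)$ just proved $\pi_{v_0}$ is also $(H_{n-2\kappa(\pi),2\kappa(\pi)},\psi)$-distinguished. Since $\pi_{v_0}$ is an irreducible representation of $GL_n$ over a non-archimedean field, Theorem~\ref{thm:uniq-disj} forbids it from being distinguished by two distinct members of the family $\{\mix_{n-2j,2j}\}_{j=0}^{[\frac n2]}$; hence $k=\kappa(\pi)$, i.e. $(r,2k)=(n-2\kappa(\pi),2\kappa(\pi))$, and $\pi$ is $(H_{r,2k},\psi_r)$-distinguished.

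The step that carries all the weight is the factorization invoked in $(1)\Rightarrow(2)$: one needs not merely that \eqref{period} unfolds to an Eulerian integral, but that the factorization is genuinely term-by-term, so that non-identical-vanishing of the adelic period is equivalent to non-identical-vanishing of each local $(H_{r,2k},\psi_r)$-functional on the relevant dense, unramified-normalized subspace of $\pi_v$. This is exactly the input imported from \cite{os}; granting it, the cycle $(1)\Rightarrow(2)\Rightarrow(3)\Rightarrow(1)$ closes. In fact the argument yields the sharper conclusion that the Klyachko distinction type of a discrete automorphic representation is already pinned down by any single one of its finite-place components, namely it is $\mix_{n-2\kappa(\pi),2\kappa(\pi)}$ at every place.
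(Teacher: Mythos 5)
Your proof is correct and is exactly the argument the paper has in mind: the paper states this theorem immediately after noting that (i) \cite{os} produces the explicit $\kappa(\pi)$ with $\pi$ globally $H_{n-2\kappa(\pi),2\kappa(\pi)}$-distinguished, (ii) \cite{os} shows this period is factorizable, and (iii) Corollary~\ref{cor: unitary mult one} (really the disjointness in Theorem~\ref{thm:uniq-disj}) pins down the type locally. Your cycle $(1)\Rightarrow(2)\Rightarrow(3)\Rightarrow(1)$ fills in the one-line remark with the intended details, and the crucial closing step $(3)\Rightarrow(1)$ correctly combines the global existence input from \cite{os} with the local disjointness of Theorem~\ref{thm:uniq-disj} at the finite place $v_0$. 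One small observation: for $(1)\Rightarrow(2)$ you do not actually need full Eulerian factorization of $P_{r,2k}$ for an arbitrary $(r,2k)$. Since $\pi$ is a restricted tensor product and $P_{r,2k}$ gives a nonzero $(H_{r,2k}(\A_F),\psi_{r,2k})$-equivariant functional, for any fixed $v_0$ one may choose a pure tensor $\otimes_v\phi^0_v$ on which $P_{r,2k}$ is nonzero and then $\phi_{v_0}\mapsto P_{r,2k}(\phi_{v_0}\otimes\otimes_{v\ne v_0}\phi^0_v)$ is already a nonzero local $(H_{r,2k}(F_{v_0}),\psi_{r,2k})$-functional. This sidesteps any worry about whether \cite{os}'s factorization statement covers every $(r,2k)$ rather than only the $\kappa(\pi)$ period, and it is all that $(3)\Rightarrow(1)$ in fact uses. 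Everything else in your argument is as in the paper.
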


The rest of this work is organized as follows. After setting up the
notation in \S\ref{notation}, in \S\ref{Reduction to Invariant
Distributions}-\S\ref{sec: Reduction to H-orbits} we reduce Theorem
\ref{thm:uniq-disj} to a statement about invariant distributions on
orbits. This statement is made more explicit in \S\ref{sec: explicit
property} and is then proved by induction in \S\ref{sec: induction}.

\section{Notation}\label{notation}

Let $F$ be a non-archimedean local field and for any positive
integer $r$ let $G_r=GL_r(F).$ We also set $G_0=\{1\}.$ Throughout,
we fix a positive integer $n$ and let $G=G_n.$ For a partition
$(n_1,\dots,n_t)$ of $n$ we denote by $P_{(n_1,\dots,n_t)}$ the
standard parabolic subgroup of $G$ of type $(n_1,\dots,n_t).$ It
consists of matrices in upper triangular block form. If
$P=P_{(n_1,\dots,n_t)}$ we denote by $\overline P$ the parabolic
opposite to $P.$ It consists of matrices in lower triangular form.
When we say that $P=MU$ is the standard Levi decomposition of $P$ we
mean that $U$ is its unipotent radical, and $M=P \cap \overline
P=\{\diag(g_1,\dots,g_t):g_i \in G_{n_i}\}.$ We then denote by
$\overline U$ the unipotent radical of $\overline P.$ We denote by
$a^{(r)}$ the $r$-tuple $(a,\dots,a)$, thus for example $P_{(1)^n}$
is the subgroup of upper triangular matrices in $G.$ For any
standard Levi subgroup $M$ of $G$ denote by $W_M$ the Weyl group of
$M$ and let $W=W_G.$ If $M'$ is another standard Levi subgroup then
any double coset in $W_M \bs W /W_{M'}$ has a unique element of
minimal length which we refer to as a left $W_M$ and right $W_{M'}$
reduced Weyl element. We denote by ${}_M W_{M'}$ the set of all left
$W_M$ and right $W_{M'}$ reduced Weyl elements. For integers $a$ and
$b$ we set $[a,b]=\{x \in \zz: a \le x \le b\}.$ For any subset $A
\subseteq [1,n]$ we denote by $SA$ the permutation group in the
elements of $A.$ It will be convenient to identify $W$ with
$S[1,n].$ If $P=MU$ and $P'=M'U'$ are standard parabolic subgroups
of $G$ with their standard Levi decompositions, the Bruhat
decomposition of $G$ gives the disjoint union
\begin{equation}\label{eq: bruhat decomp}
    G=\mathop{\sqcup}\limits_{w \in {}_M W_{M'}} Pw\overline {P'}.
\end{equation}

For any matrix $X$ let ${}^t X$ denote the transpose matrix.
For a skew-symmetric matrix $\ii=-{}^t \ii \in G_{2k}$ let
\[
    Sp(\ii)=\{g \in G_{2k}:{}^t g\ii g=\ii\}
\]
and let
\[
 J_{2k}=
    \left(
\begin{array}{cc}
  0  & w_k   \\
  -w_k  & 0   \\
\end{array}
\right)
\]
where $w_k \in G_k$ is the matrix with $ij^{th}$ entry
$\delta_{i,n+1-j}.$ Denote by $U_r$ the subgroup of upper triangular
unipotent matrices and by $\overline U_r$ the subgroup of lower
triangular unipotent matrices in $G_r$. For non-negative integers
$r$ and $k$ let
\[
    H_{r,2k} =\{
\left(
\begin{array}{cc}
  u  & X   \\
  0  &  h  \\
\end{array}
\right): u \in U_r,\, X \in M_{r \times 2k}(F),\,h \in Sp(J_{2k})\}
\]
and let
\[
    \overline H_{r,2k} =\{
\left(
\begin{array}{cc}
  u  & 0   \\
  X  &  h  \\
\end{array}
\right): u \in \overline U_r,\, X \in M_{2k \times r}(F),\,h \in
Sp(J_{2k})\}.
\]
Note that $\overline H_{r,2k}$ is the image of $H_{r,2k}$ under
transpose. For $g \in G$ let
\[
g^\tau={}^t g^{-1}.
\]
The restriction to $ H_{r,2k}$ of the involution $\tau:G \to G$
defines a group isomorphism from $ H_{r,2k}$ to $\overline
H_{r,2k}.$ Let $n=r+2k=r'+2k'$ and let
$\hh^{r,r'}=\hh^{r,r'}_n=H_{r,2k} \times \overline H_{r',2k'}.$ Thus
\[
\hh^{r,r'}=\{(h_1,h_2^\tau):h_1 \in H_{r,2k},\,h_2 \in H_{r',2k'}\}.
\]
We denote by $e_{\hh^{r,r'}}$ the identity element of $\hh^{r,r'}.$
It will also be useful to consider the map $\xi:\hh^{r,r'} \to
\hh^{r',r}$ defined by
\[
    \xi(h_1,h_2^\tau)=(h_2,h_1^\tau).
\]
The group $\hh^{r,r'}$ acts on $G$ by
\[
    h \cdot g=h_1 g \, {}^t h_2,\, h=(h_1,h_2^\tau) \in \hh^{r,r'},\,g \in G.
\]
We observe that
\begin{equation}\label{eq: xi-transpose}
    {}^t(h \cdot g)=\xi(h) \cdot {}^t g,\,h \in \hh^{r,r'},\,g \in G.
\end{equation}
When $r=r'$ the map $\xi$ is an involution of $\hh^{r,r}.$ The
formula \eqref{eq: xi-transpose} allows us then to define the semi
direct product
\[
\widetilde \hh^{r,r}= \hh^{r,r} \rtimes \{\pm1\}
\]
with multiplication rule
\[
(h,\epsilon)(h',\epsilon')=(h
\,\xi_\epsilon(h'),\epsilon\epsilon')\text{ where
}\xi_\epsilon(h)=\begin{cases}h & \epsilon=1\\ \xi(h) & \epsilon=-1.
\end{cases}
\]
Here $h,\,h' \in \hh^{r,r},\,\epsilon,\,\epsilon' \in \{\pm 1\}.$
The group $\widetilde \hh^{r,r}$ acts on $G$ by
\[
    (h,\epsilon)\cdot g=h \cdot T_\epsilon (g) \text{ where }
    T_\epsilon(g)=\begin{cases}g & \epsilon=1\\ {}^tg & \epsilon=-1.
\end{cases}
\]
In order to unify notation, when $r \ne r' $ we shall set
$\widetilde \hh^{r,r'}=\hh^{r,r'} \times \{1\}.$

For a non-trivial character $\psi$ of $F$ we define as in
\S\ref{intro} the generic character $\psi_r$ of $U_r$ by \eqref{eq:
gen char} and the character $\psi_{r,2k}$ of $H_{r,2k}$ by
\eqref{eq: char on H}. Let $\theta^{r,r'}$ be the character of
$\hh^{r,r'}$ defined by
\[
    \theta^{r,r'}(h_1,h_2^\tau)=\psi_{r,2k}(h_1)\psi_{r',2k'}(h_2).
\]
We also extend $\theta^{r,r'}$ to the character $\widetilde
\theta^{r,r'}$ of $\widetilde \hh^{r,r'}$ defined by
\[
    \widetilde \theta^{r,r'}(h,\epsilon)=\epsilon\ \theta^{r,r'}(h).
\]

\section{Reduction to Invariant Distributions}\label{Reduction to Invariant Distributions}
Let $n=r+2k=r'+2k'$ be 2 decompositions of $n.$ Let $\hh=\hh^{r,r'}$
and $\theta=\theta^{r,r'}.$ The action of $\widetilde\hh$ on $G$
defines an action on $C_c^\infty(G)$ and on the space
$\dd(G)=C_c^\infty(G)^*$ of distributions on $G$ by
\[
    (h\cdot \phi)(g)=\phi(h^{-1}\cdot
    g)
\text{ and }
    (h\cdot D)(\phi)=D(h^{-1}\cdot
    \phi)
\]
for $h \in \widetilde \hh,\,g \in G,\,\phi \in C_c^\infty(G)$ and $D
\in \dd(G).$ In this section we show that Theorem
\ref{thm:uniq-disj} reduces to the following.
\begin{proposition}\label{prop: dist reduction}
If $D \in \dd(G)$ is such that $h \cdot D=\tilde\theta(h) D$ for all
$h \in \widetilde\hh$ then $D=0,$ i.e.
\begin{equation}\label{equivariant dist}
    \Hom_{\widetilde\hh}(C_c^\infty(G),\tilde\theta)=0.
\end{equation}
\end{proposition}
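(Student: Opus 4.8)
The plan is to prove Proposition \ref{prop: dist reduction} by Bernstein's localization principle, turning the vanishing of $\widetilde\hh$-equivariant distributions on $G$ into an orbit-by-orbit statement about stabilizers, which I would then settle by induction on $n$. First I would check that the action of $\widetilde\hh=\widetilde\hh^{r,r'}$ on $G$ fits the hypotheses of the localization principle: cutting $G$ by the Bruhat decomposition \eqref{eq: bruhat decomp} into the finitely many cells $P_{(r,2k)}\,w\,\overline P_{(r',2k')}$ and refining by the outer involution $T_{-1}$ when $r=r'$, one is reduced inside each cell to the action of the unipotent, rectangular and symplectic parts of $H_{r,2k}$ and $\overline H_{r',2k'}$, whose orbit space is well behaved (finitely many orbits up to a residual torus). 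The principle then gives $\Hom_{\widetilde\hh}(C_c^\infty(G),\tilde\theta)=0$ as soon as for every $\widetilde\hh$-orbit $O=\widetilde\hh\cdot g$ one has $\Hom_{\widetilde\hh_g}(\cc,\ \tilde\theta\,\modulus_O)=0$, where $\widetilde\hh_g$ is the stabilizer of $g$ and $\modulus_O$ is the modulus character of $\widetilde\hh_g$ governing densities on $O$. Since $\tilde\theta$ is unitary while $\modulus_O$ is real valued, this is automatic once $\tilde\theta$ restricts non-trivially to $\widetilde\hh_g$, so the task becomes: no stabilizer kills $\tilde\theta$ — with the caveat that on a stabilizer on which $\tilde\theta$ happens to be trivial one needs $\modulus_O$ non-trivial instead, which in the diagonal case $r=r'$ should come from the sign of the $\xi$-type element in $\widetilde\hh_g$.

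Next I would make the orbit picture explicit. Using \eqref{eq: xi-transpose} to keep track of the outer involution, choose representatives $g$ (Weyl elements decorated with symmetric/symplectic data); then $\hh_g$ is the intersection of $H_{r,2k}$ with the $g$-conjugate of $\overline H_{r',2k'}$, possibly enlarged by one involution, and one reads off the restriction of $\theta^{r,r'}$ — essentially a product $\psi_r(u)\,\psi_{r'}(u')$ of the two unipotent coordinates of a stabilizer element — together with the outer sign when it is relevant. The structural point, forced by the shape of $H_{r,2k}$, is that $\hh_g$ contains a simple root subgroup of $U_r$ or of $\overline U_{r'}$ (or a symplectic root subgroup) on which the corresponding $\psi$ is non-trivial, \emph{except} for $g$ in a short, explicitly describable list of special orbits.

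I would then package the outcome as a concrete property $(\prop_n)$: for every orbit representative either (i) $\widetilde\hh_g$ contains a one-parameter unipotent subgroup on which $\tilde\theta$ is non-trivial, or (ii) $r=r'$, the orbit is $T_{-1}$-stable, and the outer involution acts on the orbit with sign $-1$. Property $(\prop_n)$ is proved by induction on $n$: when $g$ has block structure subordinate to a proper parabolic, $\hh_g$ splits accordingly and $(\prop_m)$ for the smaller Klyachko subgroups ($m<n$) provides the desired subgroup; the finitely many indecomposable representatives, and the base cases $n\le 2$, are done by hand, and it is precisely there that alternative (ii) — hence the extension $\widetilde\hh=\hh\rtimes\{\pm1\}$ — does what plain $\hh^{r,r'}$-invariance cannot.

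The localization step is formal once its geometric hypotheses are in place, but verifying those — that the $\hh$-orbits on each Bruhat cell form a stratification to which Bernstein's principle applies — needs care. The real difficulty, and the main obstacle, is the combinatorics of the middle steps: enumerating the orbits, computing each stabilizer $\hh_g$ with its characters $\theta^{r,r'}|_{\hh_g}$, the outer sign, and $\modulus_O$, isolating the exceptional orbits where a generic root subgroup does not conclude, and carrying the induction through those exceptional cases.
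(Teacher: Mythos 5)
Your overall strategy matches the paper's: reduce via Bernstein--Zelevinskii's localization principle (constructibility plus \cite[Theorem 6.9]{MR0425030}) to the statement that on every orbit the character $\tilde\theta$ is non-trivial on the stabilizer $\widetilde\hh_g$, reformulate this explicitly as a property $\prop(g,r,r')$ of orbit representatives, and run an induction on $n$ organized by the Bruhat decomposition $G=\bigsqcup_w Pw\overline{P'}$. The roles of the outer involution $T_{-1}$, the unitarity of $\tilde\theta$ versus positivity of the modulus character, and the dichotomy ``$\theta$ non-trivial on $\hh_g$ or $r=r'$ and the sign $-1$ occurs on $\widetilde\hh_g$'' are all the right moving parts and coincide with what the paper does.

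There is, however, a genuine gap in where your induction bottoms out. You write that the base cases are ``$n\le 2$, done by hand,'' but the inductive steps do not terminate at small $n$: each time a simple root subgroup is peeled off, \emph{both} $r$ and $r'$ drop by one, so starting from $r=r'>0$ you are driven toward $r_1=r_1'=0$ while $n_1$ may remain large. The base case is therefore the pure symplectic one, $\prop_{2m}(g,0,0)$, i.e.\ that ${}^t g \in Sp_{2m}(F)\,g\,Sp_{2m}(F)$ for every $g\in GL_{2m}(F)$. This is not a finite check but a substantive theorem, used as an external input (Klyachko, Heumos--Rallis; independently Goldstein--Guralnick). Likewise the Whittaker endpoint $\prop_n(g,n,n)$ is needed as input (Gelfand--Kazhdan's double-coset lemma). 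Without these two extremes the induction does not close. A second, subsidiary gap: the ``indecomposable representatives'' in the closed Bruhat cell are not a finite list but a family indexed by the symplectic blocks, and settling them requires (a) the dichotomy between the totally isotropic and non-isotropic cases for the skew forms $\ii_1^{-1},\ii_2$ attached to $g$, (b) in the non-isotropic case a lemma of Klyachko producing a unipotent symplectic element on which $\psi_r$ is non-trivial, and (c) in the isotropic case a normal-form lemma for $Sp\times(\text{parabolic})$ orbits that allows the reduction to a smaller symplectic size. Your outline (``block structure subordinate to a proper parabolic'') gestures in the right direction but does not surface these points, and they are where the real content of the proof lies.
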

\subsection{Proposition \ref{prop: dist reduction} implies Theorem
\ref{thm:uniq-disj}}\label{subsec: prop to thm}
Let $\pi$ be an irreducible representation of $G$. Set
$H=H_{r,2k},\,H'=H_{r',2k'},\,\psi=\psi_{r,2k}$ (forgive the abuse
of notation) and $\psi'=\psi_{r',2k'}.$ Denote by $\overline H$
(resp. $\overline{H'}$) the image of $H$ (resp. ${H'}$) under
$\tau.$ Let $\ell \in \Hom_{H}(\pi,\psi)$ and $\ell' \in
\Hom_{H'}(\pi,\psi').$ The representation $\pi^\tau(g)=\pi(g^\tau)$
realizes the contragradiant representation $\tilde \pi$ on the space
$V_\pi$ of $\pi$ \cite{MR0404534} (see also \cite[Theorem
7.3]{MR0425030}). Note that $\ell' \in
\Hom_{\overline{H'}}(\pi^\tau,(\psi')^\tau)$ defines a functional
$\tilde \ell'$ on the space $V_{\tilde \pi}$ of $\tilde\pi$ and that
$\tilde \ell' \in \Hom_{\overline{H'}}(\tilde\pi,(\psi')^\tau)$.
Note further that $\ell\circ\pi(\phi)$ is a smooth vector in
$V_{\tilde\pi}$. Define the distribution $D$ on $G$ by
\begin{equation}\label{dist}
    D(\phi)=\tilde\ell'(\ell\circ \pi(\phi)),\,\phi \in
    C_c^\infty(G).
\end{equation}
For $h \in H$ and $h' \in H'$ we have $\pi((h^{-1},{}^t h')\cdot
\phi)=\pi(h)\circ\pi(\phi)\circ\pi({}^th')$ and therefore
\[
    ((h,(h')^\tau)\cdot
    D)(\phi)=\tilde\ell'(\ell \circ\pi(h)\circ\pi(\phi)\circ\pi({}^t
    h')).
\]
By our assumption on $\ell$ and $\ell'$ we have, $\ell
\circ\pi(h)=\psi(h) \ell$ and $\tilde\ell' \circ \tilde
\pi((h')^\tau)=\psi'(h')\tilde \ell',\,h \in H,\,h' \in H'.$ Also
note that for any $\tilde v \in V_{\tilde \pi}$ viewed as a smooth
functional on $\pi$ the composition $\tilde v \circ \pi(g)$ is again
a smooth functional on $\pi$ and in fact
\[
    (\tilde v \circ \pi(g))(v)=\tilde
    v(\pi(g)v)=(\tilde\pi(g^{-1})\tilde v)(v)
\]
i.e.,
\[
    \tilde v \circ \pi(g)=\tilde\pi(g^{-1})\tilde v.
\]
Applying this to $\tilde v=\ell \circ \pi(\phi)$ and $g={}^t h'$ we
get that
\begin{multline*}
    ((h,(h')^\tau)\cdot
    D)(\phi)=\psi(h)\ \tilde\ell'((\ell \circ \pi(\phi))\circ \pi({}^t
    h'))\\=\psi(h)\ \tilde\ell'(\tilde\pi((h')^\tau)(\ell \circ \pi(\phi)))=
    \theta(h,(h')^\tau)\ D(\phi).
\end{multline*}
We see that $D$ is $(\hh,\theta)$-equivariant. If $r\ne r'$ it
follows from Proposition \ref{prop: dist reduction} that $D=0$. If
we assume further that $\ell$ is non-zero then the vectors $\ell
\circ \pi(\phi),\,\phi\in C_c^\infty(G)$ span $V_{\tilde \pi}$. We
conclude that $\tilde\ell'$ must vanish identically on $V_{\tilde
\pi}$ and hence also $\ell'=0.$ This shows that
\begin{equation}\label{eq: disjointness}
\dim_\cc(
\Hom_{H_{r,2k}}(\pi,\psi_{r,2k}))\dim_\cc(\Hom_{H_{r',2k'}}(\pi,\psi_{r',2k'}))=0
\text{ whenever }r\ne r'.
\end{equation}
Assume now that  $r=r'.$ Recall that $e_{\hh}$ is the unit element
of $\hh$. Note that $(e_\hh,-1)\cdot \phi={}^t \phi$ where
${}^t\phi(g)=\phi({}^tg),\,\phi \in C_c^\infty(G),\,g \in G.$ Note
further that for every $h \in \hh$ we have
\[
(h,1)(e_\hh,-1)=(e_\hh,-1)(\xi(h),1)
\]
and that $\theta(\xi(h))=\theta(h).$ Since $D \in
\Hom_\hh(C_c^\infty(G),\theta)$, it also follows that
\[
D_1=D-(e_\hh,-1)\cdot D \in\Hom_\hh(C_c^\infty(G),\theta).
\]
Furthermore, since $\tilde\theta(e_\hh,-1)=-1$ and $(e_\hh,-1)\cdot
D_1=-D_1$ we conclude that $D_1 \in
\Hom_{\widetilde\hh}(C_c^\infty(G),\tilde\theta).$ Proposition
\ref{prop: dist reduction} now implies that
\begin{equation}\label{eq: equivarience}
D=(e_\hh,-1)\cdot D.
\end{equation}
Let $B:C_c^\infty(G) \times C_c^\infty(G) \to \cc$ be the bilinear
form defined by
\begin{equation}
B(\phi_1,\phi_2)=D(\phi_{1}*\phi_{2})
\end{equation}
where
\[
    (\phi_{1}*\phi_{2})(g)=\int_G \phi_1(x) \phi_2(x^{-1}g)\ dx.
\]
Note that
\[
\pi(\phi_1 *\phi_2)=\pi(\phi_1)\circ \pi(\phi_2) \text{ and }
{}^t(\phi_1 *\phi_2)={}^t \phi_2 * {}^t\phi_1,\,\phi_1,\,\phi_2 \in
C_c^\infty(G).
\]
Thus, \eqref{eq: equivarience} implies that
\[
    B(\phi_1,\phi_2)=B((e_\hh,-1)\cdot \phi_2,(e_\hh,-1)\cdot\phi_1).
\]
This implies that $R_B=(e_\hh,-1)\cdot L_B$ where
\[
    L_B=\{\phi \in C_c^\infty(G): B(\phi,\cdot)\equiv 0\} \text{ and
    }
        R_B=\{\phi \in C_c^\infty(G): B(\cdot,\phi)\equiv 0\}
\]
are respectively the left and right kernels of $B.$ In other words
\begin{equation}\label{eq: left-right ker}
R_B=\{{}^t \phi:\phi \in L_B\}.
\end{equation}
For a functional $\lambda$ on $V_\pi$ let
\[
    \Ker(\lambda,\pi)=\{\phi \in C_c^\infty(G): \lambda \circ \pi(\phi)=0\}.
\]
Note that
\[
    B(\phi_1,\phi_2)=(\tilde \ell' \circ\pi(\phi_2^\vee))(\ell \circ
    \pi(\phi_1))
\]
where
\[
    \phi^\vee(g)=\phi(g^{-1})
\]
and therefore
\[
    L_B=\Ker(\ell,\pi) \text{ and } R_B=\{\phi^\vee:\phi \in \Ker(\tilde\ell',\tilde\pi)\}.
\]
By our definitions we have
\[
    \Ker(\tilde\ell',\tilde\pi)=\Ker(\ell',\pi^\tau)=\{({}^t\phi)^\vee:\phi \in\Ker(\ell',\pi)\}
\]
and therefore
\[
    R_B=\{{}^t\phi:\phi \in \Ker(\ell',\pi)\}.
\]
It now follows from \eqref{eq: left-right ker} that
\[
    \Ker(\ell,\pi)=\Ker(\ell',\pi).
\]
Since $\pi$ is irreducible we get that $\ker \ell=\ker\ell'$ and
therefore that $\ell$ and $\ell'$ are proportional. We therefore
proved that
\begin{equation}\label{eq: uniqueness}
    \dim_\cc( \Hom_{H_{r,2k}}(\pi,\psi_{r,2k})) \le 1 \text{ for all }0 \le k \le [\frac n2].
\end{equation}
Theorem \ref{thm:uniq-disj} is now a straightforward consequence of
\eqref{eq: frob}, \eqref{eq: disjointness} and \eqref{eq:
uniqueness}.

\section{Reduction to $\hh$-orbits}\label{sec: Reduction to
H-orbits} We keep the notation introduced in \S\ref{Reduction to
Invariant Distributions}. For every $g \in G$ we denote by $\hh_g$
the stabilizer of $g$ in $\hh$ and by $\widetilde \hh_g$ the
stabilizer of $g$ in $\widetilde \hh.$ The purpose of this section
is to reduce Proposition \ref{prop: dist reduction} to the
following.
\begin{proposition}\label{prop: main double cosets}
For every $g \in G$ the character $\widetilde \theta$ is non-trivial
on $\widetilde \hh_g.$
\end{proposition}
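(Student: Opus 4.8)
The plan is to fix $g \in G$ and analyze the stabilizer $\widetilde\hh_g$ together with the restriction of $\widetilde\theta$ to it, reducing everything to the unipotent part. First I would observe that $\widetilde\theta$ is already non-trivial on $\widetilde\hh_g$ as soon as it is non-trivial on the identity component $\hh_g$, or when the $\{\pm 1\}$-part is genuinely present and contributes the sign; so after disposing of the easy case where $\xi$ acts non-trivially and $\widetilde\hh_g$ meets the non-identity coset (there $\widetilde\theta$ takes the value $-1$), the heart of the matter is to show: \emph{for every $g$, the character $\theta = \theta^{r,r'}$ is non-trivial on $\hh_g$.} Writing $h = (h_1, h_2^\tau)$ with $h_1 = \left(\begin{smallmatrix} u & X \\ 0 & h\end{smallmatrix}\right) \in H_{r,2k}$ and similarly for $h_2$, the condition $h \cdot g = g$ reads $h_1 g \,{}^t h_2 = g$, i.e. $h_1 = g \,{}^t h_2^{-1} g^{-1}$, which identifies $\hh_g$ with the set of $h_2 \in H_{r',2k'}$ such that $g\,{}^t h_2^{-1} g^{-1} \in H_{r,2k}$, and on this set $\theta(h) = \psi_{r,2k}(h_1)\psi_{r',2k'}(h_2) = \psi_r(u)\psi_{r'}(u')$ where $u, u'$ are the upper-triangular-unipotent blocks of $h_1, h_2$.

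\textbf{Key steps.} The strategy I would follow is the standard one for proving such non-existence-of-invariant-functional statements via a ``root-by-root'' or ``filtration'' argument on the unipotent radicals. Concretely: (1) Reduce to a single Bruhat cell. By the Bruhat decomposition \eqref{eq: bruhat decomp} applied with $P$ and $P'$ taken to be suitable parabolics adapted to $H_{r,2k}$ and $\overline H_{r',2k'}$ (for instance the $(1^r, 2k)$- and $(1^{r'},2k')$-type parabolics, whose ``unipotent-plus-symplectic'' structure is exactly what $H$ and $\overline{H'}$ refine), every $g$ lies in $P w \overline{P'}$ for a unique reduced $w$; decompose $g = p w \bar p'$ and absorb $p, \bar p'$ into the group, so that it suffices to treat $g = w$ a (reduced) Weyl element, or more precisely a representative twisted by the Levi/symplectic data. (2) For such $g$, compute the stabilizer $\hh_g$ explicitly: it is a subgroup whose intersection with the unipotent radical $U_r \times \overline U_{r'}$ (and with the unipotent radical of the symplectic factors) is cut out by the $w$-conjugation conditions, and $\theta$ on it is the product of the two Gelfand--Graev type characters $\psi_r, \psi_{r'}$. (3) Show the character is non-trivial there. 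The mechanism: if $\theta$ were trivial on $\hh_g$, then in particular it would be trivial on every root subgroup $U_\alpha$ (or pair of root subgroups) contained in $\hh_g$; since $\psi_r$ is non-trivial on each simple root subgroup of $U_r$, one derives a contradiction unless $g = w$ ``separates'' the simple roots of both sides in an incompatible way — but a counting/length argument, using that $r + 2k = r' + 2k' = n$ and that a symplectic group of rank $k$ cannot ``absorb'' more than $k$ of the relevant coordinates, shows this is impossible for every $w$. This is where I would expect to invoke an inductive structure: peel off the largest simple-root subgroup, quotient, and reduce to a smaller $n$ (this matches the paper's announced induction in \S\ref{sec: induction}).

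\textbf{Main obstacle.} The hard part will be step (3), the combinatorial heart: organizing the case analysis over Weyl elements $w \in {}_M W_{M'}$ so that for \emph{every} $w$ one exhibits an explicit element of $\hh_g$ on which $\psi_r(u)\psi_{r'}(u') \neq 1$. The difficulty is that $\hh_g$ can be complicated — it mixes a unipotent part, a symplectic part, and the ``off-diagonal'' blocks $X$ — and for a clever $w$ the naive simple-root subgroups might \emph{not} lie in $\hh_g$, so one must instead find a suitable one-parameter subgroup (a product of several root subgroups forced to move together by the $w$-conjugation relations) on which the character is still non-trivial. I anticipate this requires first making the target statement more explicit (isolating exactly which ``bad'' configurations of $w$ must be ruled out), which is presumably the content of \S\ref{sec: explicit property}, and then an induction on $n$ that strips one end of the partition at a time, using the factorization $H_{r,2k} \supset U_1 \ltimes (\text{smaller } H)$ and the parallel structure on the symplectic side. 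A secondary, more technical point will be bookkeeping the $\tau$-twist and the semidirect factor $\{\pm 1\}$ uniformly, but by the opening reduction that case is genuinely easy and only the $\hh$-part (not $\widetilde\hh$) needs the real work.
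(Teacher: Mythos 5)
Your opening reduction is incorrect, and the error is not cosmetic. You claim that the $\{\pm 1\}$-part of $\widetilde\hh$ is a disposable ``easy case,'' and that the heart of the matter is to prove $\theta$ is non-trivial on $\hh_g$ for every $g$. That last statement is simply \emph{false}: already for $g=e$ and $r=r'$, the stabilizer $\hh_e = \{(h_1,h_1): h_1 \in H_{r,2k}\cap\overline H_{r,2k}\}$ consists of pairs $(\diag(I_r,h),\diag(I_r,h))$ with $h\in Sp(J_{2k})$, on which $\theta$ is identically $1$. The extreme case $r=r'=0$ is worse still: there $\theta\equiv 1$ on all of $\hh$, so $\theta$ is trivial on $\hh_g$ for \emph{every} $g$. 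The proposition is only true because of the semidirect factor $\{\pm 1\}$, and using it is not a matter of noting that $\widetilde\theta$ ``takes the value $-1$'' on the non-identity coset: one has $\widetilde\theta(h,-1)=-\theta(h)$, and on the stabilizer $\theta(h)\in\{\pm 1\}$, so this can equal $+1$. The correct dichotomy, which the paper isolates as the property $\prop(g,r,r')$ in \S\ref{sec: explicit property}, is: either $\theta$ is non-trivial on $\hh_g$, \emph{or} $r=r'$ and there exists $y\in H_{r,2k}$ with $g^{-1}y\,{}^tg\in\overline H_{r,2k}$ and $\theta(y,g^{-1}y\,{}^tg)=1$ — the latter condition being precisely what guarantees an element of the non-identity coset of $\widetilde\hh_g$ on which $\widetilde\theta=-1$. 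Your reduction collapses this to the first alternative and thereby proves a false statement.

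Because of this mis-reduction, your proposed mechanism in step (3) — find a one-parameter root subgroup in $\hh_g$ on which $\psi_r(u)\psi_{r'}(u')\ne 1$ — cannot possibly work in general, since in the symplectic case $r=r'=0$ there is no Whittaker character at all. What is actually needed there is the statement ${}^tg \in Sp(J_n)\,g\,Sp(J_n)$ for all $g$, which is a genuinely non-trivial piece of linear algebra (the paper's Lemma \ref{lemma: symplectic case}, resting on Heumos--Rallis, Klyachko, and Goldstein--Guralnick). This is the base case of the induction and it lives entirely inside the $\{\pm 1\}$-coset you dismissed as easy. Likewise, on the closed Bruhat cell (the unique $w$ with $I_w=\emptyset$) the argument is not a counting/length argument but a dichotomy between a ``totally isotropic'' configuration — handled by finding a good representative and invoking the induction hypothesis on a smaller $n$, carrying the $\{\pm 1\}$ twist along — and the complementary case, handled by Klyachko's explicit construction of a unipotent element of $Sp(\ii)$ with prescribed $U_r$-block (Lemma \ref{lemma: unipotent simplectic}), which your proposal does not mention. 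Your Bruhat-cell setup with $P_{(1^{(r)},2k)}$ and $P_{(1^{(r')},2k')}$ and your root-subgroup mechanism do correctly describe what happens on the cells where $I_w\ne\emptyset$ (Lemma \ref{lemma: induction step}), but that is the easy half; the substance of the proof is exactly the part your reduction discards.
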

\begin{remark}
The objects involved and the statement of Proposition \ref{prop:
main double cosets} makes sense over any field $F$ and in fact, our
proof is valid in this generality. In particular, using Mackey
theory, it can provide an alternative proof of the uniqueness and
disjointness of Klyachko models over a finite field.
\end{remark}
\subsection{Proposition \ref{prop: main double cosets} implies
Proposition \ref{prop: dist reduction}}\label{subsec: prop2 to
prop1} Assume now that Proposition \ref{prop: main double cosets}
holds. We deduce that Proposition \ref{prop: dist reduction} also
holds. Let $\ch_{\widetilde\hh_g}$ denote the trivial character of
$\widetilde\hh_g.$ Note that $h \cdot g \mapsto \widetilde\hh_g
\,h^{-1} $ is a homeomorphism of $\widetilde\hh$-spaces
$\widetilde\hh \cdot g \simeq \widetilde\hh_g\bs\widetilde\hh$ that
induces an $\widetilde\hh$-isomorphism
\begin{equation}\label{isom of orbit}
    C_c^\infty(\widetilde\hh \cdot g) \simeq \ind_{\widetilde\hh_g}^{\widetilde\hh}(\ch_{\widetilde\hh_g})
\end{equation}
where $\ind$ denotes smooth induction with compact support.
Therefore, by Frobenius reciprocity \cite[\S2.29]{MR0425030}
\begin{equation}\label{frobenious}
    \Hom_{\widetilde\hh}(C_c^\infty(\widetilde\hh \cdot g),\tilde\theta)=\Hom_{\widetilde\hh_g}
    (\delta_{\widetilde\hh_g},\theta_{|\widetilde\hh_g})
\end{equation}
where $\delta_{\widetilde\hh_g}$ is the modulus function of
$\widetilde\hh_g.$ Since the image of $\tilde\theta$ lies in the
unit circle (in fact, the image of $\theta$ lies in the group of
$p$-powered roots of unity where $p$ is the residual characteristic
of $F$) and since $\delta_{\widetilde\hh_g}$ is positive, we get
that whenever $\tilde\theta_{|\widetilde\hh_g}$ is non-trivial we
also have
\begin{equation}\label{eq: char inequal}
\tilde\theta_{|\widetilde\hh_g}\ne
 \delta_{\widetilde\hh_g}.
\end{equation}
It follows from Proposition \ref{prop: main double cosets} that
\eqref{eq: char inequal} holds for every $g \in G$ and therefore by
\eqref{isom of orbit} that
\begin{equation}\label{vanish for orbit}
\Hom_{\widetilde\hh}(C_c^\infty(\widetilde\hh \cdot
g),\tilde\theta)=0,\,g \in G.
\end{equation}
Proposition \ref{prop: dist reduction} follows from \eqref{vanish
for orbit} using the theory of Gelfand-Kazhdan \cite{MR0404534}.
Indeed, we apply \cite[Theorem 6.9]{MR0425030} to the following
setting. We view $C_c^\infty(G)$ as a module over itself by
convolution. By \cite[Proposition 1.14]{MR0425030} it uniquely
defines a sheaf $\mathcal{F}$ over the {$l$-}space $G.$ We let
$\widetilde\hh$ act on $C_c^\infty(G)$ by
\[
    h \cdot_{\tilde\theta} \phi=\tilde\theta(h) h \cdot \phi.
\]
This defines an action of $\widetilde\hh$ on the sheaf
$\mathcal{F}.$ The space of $\widetilde\hh$-invariant distributions
on $\mathcal{F}$ is then precisely
$\Hom_{\widetilde\hh}(C_c^\infty(G),\tilde\theta).$ The action of
$\widetilde\hh$ on $G$ is constructible by \cite[\S 6.15, Theorem
A]{MR0425030}. The second assumption of \cite[Theorem
6.9]{MR0425030} is precisely \eqref{vanish for orbit}. It follows
that there are no $\widetilde\hh$-invariant distributions on the
sheaf $\mathcal{F},$ i.e. that \eqref{equivariant dist} holds.
\section{The property of $\hh$-orbits made explicit}\label{sec:
explicit property}

In order to prove Proposition \ref{prop: main double cosets} it will
be convenient to reformulate it, by describing more explicitly the
property of the $\widetilde\hh$-orbits that we wish to prove. We
begin with this reformulation.

\subsection{The property $\prop(g,r,r')$}

For $g \in G$ let $\prop(g,r,r')=\prop_n(g,r,r')$ be the following
property: either
\begin{equation}\label{eq: non triv char}
\text{ there exists } y \in H_{r,2k} \text{ such that } g^{-1}yg \in
\overline H_{r',2k'} \text{ and }\theta^{r,r'}(y,g^{-1}yg ) \ne 1
\end{equation}
or $r=r'$ and
\begin{equation}\label{eq: transpose inv orbit}
\text{ there exists } y \in H_{r,2k} \text{ such that } g^{-1}y\
{}^tg \in \overline H_{r,2k} \text{ and }\theta^{r,r}(y,g^{-1} y \
{}^tg ) = 1.
\end{equation}
\begin{lemma}\label{lemma: reformulate prop}
For every $g \in G,$ $\widetilde\theta^{r,r'}$ is non-trivial on
$\widetilde\hh^{r,r'}_g$ if and only if $\prop(g,r,r').$
\end{lemma}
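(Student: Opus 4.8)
The plan is to unwind both sides of the claimed equivalence by directly describing the stabilizer $\widetilde\hh^{r,r'}_g$ and the restriction of $\widetilde\theta^{r,r'}$ to it. Recall that $\hh^{r,r'}$ acts on $G$ by $h\cdot g = h_1 g\, {}^t h_2$ for $h = (h_1, h_2^\tau)$, and when $r = r'$ the enlarged group $\widetilde\hh^{r,r}$ also contains the element $(e_\hh, -1)$ acting by $g \mapsto {}^t g$. So an element $(h_1, h_2^\tau) \in \hh^{r,r'}$ fixes $g$ precisely when $h_1 g\, {}^t h_2 = g$, i.e. $g^{-1} h_1 g = {}^t h_2^{-1} = h_2^\tau = (h_2^{-1})^\tau$; writing $y = h_1 \in H_{r,2k}$, the condition becomes $g^{-1} y g = (h_2^{-1})^\tau \in \overline H_{r',2k'}$ (using that $\tau$ carries $H_{r',2k'}$ onto $\overline H_{r',2k'}$), and the value of $\theta^{r,r'}$ on this stabilizer element is exactly $\theta^{r,r'}(y, g^{-1}yg)$ once one checks $\psi_{r',2k'}(h_2) = \psi_{r',2k'}$ of the corresponding matrix extracted from $g^{-1}yg$. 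Thus $\theta^{r,r'}$ is non-trivial on the ``$\epsilon = 1$'' part of $\widetilde\hh^{r,r'}_g$ if and only if \eqref{eq: non triv char} holds.

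Next I would handle the $\epsilon = -1$ component, which only occurs when $r = r'$. An element $(h, -1) \in \widetilde\hh^{r,r}$ fixes $g$ when $h \cdot {}^t g = g$, i.e. $h_1 \,{}^t g\, {}^t h_2 = g$; transposing gives $h_2\, g\, {}^t h_1 = {}^t g$. Setting $y = h_1 \in H_{r,2k}$, one solves $h_2 = {}^t g\, {}^t h_1^{-1} g^{-1} = {}^t(g^{-1} h_1^{-1}\, {}^t g)$, so the condition for such an $(h,-1)$ to exist is that $g^{-1} y^{-1}\, {}^t g \in \overline H_{r,2k}$ for some $y \in H_{r,2k}$ — equivalently (replacing $y$ by $y^{-1}$) that $g^{-1} y\, {}^t g \in \overline H_{r,2k}$ for some $y \in H_{r,2k}$. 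On such an element the character value is $\widetilde\theta^{r,r}(h,-1) = -\theta^{r,r}(h) = -\psi_{r,2k}(y)\psi_{r,2k}(h_2)$, and again $\psi_{r,2k}(h_2)$ is read off from $g^{-1}y\,{}^tg$ so that the product equals $-\theta^{r,r}(y, g^{-1}y\,{}^tg)$. Hence: if the $\epsilon = -1$ coset of $\widetilde\hh^{r,r}_g$ is non-empty, then $\widetilde\theta$ is non-trivial on $\widetilde\hh^{r,r}_g$ as soon as either the $\epsilon = 1$ part is already non-trivial (first bullet) or some element of the $\epsilon = -1$ coset has $\theta^{r,r}(y, g^{-1}y\,{}^tg) = 1$, which gives $\widetilde\theta$-value $-1$; and this last case is precisely \eqref{eq: transpose inv orbit}.

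Assembling these, I would argue the equivalence in both directions. If $\prop(g,r,r')$ holds via \eqref{eq: non triv char}, the corresponding stabilizer element has $\theta$-value $\ne 1$, so $\widetilde\theta$ is non-trivial; if it holds via \eqref{eq: transpose inv orbit} with $r = r'$, the corresponding $(h,-1)$ lies in $\widetilde\hh_g$ with $\widetilde\theta$-value $-1 \ne 1$. Conversely, suppose $\widetilde\theta$ is non-trivial on $\widetilde\hh^{r,r'}_g$. If it is already non-trivial on the $\epsilon = 1$ part $\hh^{r,r'}_g$, then \eqref{eq: non triv char} holds. Otherwise $\theta^{r,r'} \equiv 1$ on $\hh^{r,r'}_g$, so non-triviality of $\widetilde\theta$ forces $r = r'$ and the existence of some $(h,-1) \in \widetilde\hh^{r,r}_g$ with $\widetilde\theta(h,-1) = -\theta^{r,r}(h) \ne 1$, i.e. $\theta^{r,r}(h) = 1$; translating via the computation above, this element witnesses \eqref{eq: transpose inv orbit}. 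The only genuinely delicate bookkeeping — and the step I expect to be the main obstacle — is verifying carefully that for $y \in H_{r,2k}$ with $g^{-1} y g \in \overline H_{r',2k'}$ (respectively $g^{-1} y\, {}^t g \in \overline H_{r,2k}$), the matrix $h_2$ recovered from the stabilizer equation indeed lies in $H_{r',2k'}$ and that $\psi_{r',2k'}(h_2)$ equals the value $\psi_{r',2k'}$ assigns to the $\tau$-image $g^{-1}yg$, so that $\theta^{r,r'}(y, g^{-1}yg)$ as defined in \S\ref{sec: explicit property} matches the intrinsic character value on the stabilizer; this amounts to tracking the interplay of the involution $\tau$, the transpose, and the definition $\theta^{r,r'}(h_1, h_2^\tau) = \psi_{r,2k}(h_1)\psi_{r',2k'}(h_2)$, which is routine but must be done with care about which factor the character is evaluated on.
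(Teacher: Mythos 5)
Your overall strategy is the same as the paper's: describe the stabilizer $\widetilde\hh^{r,r'}_g$ explicitly, split it into the $\epsilon=1$ part (which gives \eqref{eq: non triv char}) and, when $r=r'$, the $\epsilon=-1$ coset (which gives \eqref{eq: transpose inv orbit}), and then compare character values. There are some small sign/transpose slips in the intermediate formulas (e.g.\ $h_2^\tau \ne (h_2^{-1})^\tau$, and ${}^t g\,{}^t h_1^{-1}g^{-1} \ne {}^t(g^{-1}h_1^{-1}\,{}^t g)$), but these do not affect the end results of those computations.

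The genuine gap is in the converse direction, when $\theta^{r,r}\equiv 1$ on $\hh^{r,r}_g$ and you deduce from $\widetilde\theta^{r,r}(h,-1)=-\theta^{r,r}(h)\ne 1$ that $\theta^{r,r}(h)=1$. A priori $\theta^{r,r}(h)$ is just a complex number of modulus one, so $-\theta^{r,r}(h)\ne 1$ only says $\theta^{r,r}(h)\ne -1$; it does \emph{not} give $\theta^{r,r}(h)=1$. What is missing is the observation the paper makes at this point: if $(h,-1)\in\widetilde\hh^{r,r}_g$ then $(h,-1)^2=(h\,\xi(h),1)\in\hh^{r,r}_g$, so $\theta^{r,r}(h\,\xi(h))=1$ under the standing assumption; combined with $\theta^{r,r}\circ\xi=\theta^{r,r}$, this gives $\theta^{r,r}(h)^2=1$, i.e.\ $\theta^{r,r}(h)\in\{\pm 1\}$. (Equivalently: $\widetilde\theta^{r,r}$ restricted to $\widetilde\hh^{r,r}_g$ descends to a character of the order-two quotient $\widetilde\hh^{r,r}_g/\hh^{r,r}_g$.) Only with this in hand does $\theta^{r,r}(h)\ne -1$ force $\theta^{r,r}(h)=1$, completing the equivalence with \eqref{eq: transpose inv orbit}. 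You flagged the bookkeeping about $\tau$ and $\psi_{r',2k'}$ as the likely obstacle, but that part is indeed routine; the step you actually skipped is this order-two argument.
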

\begin{proof}
Note that
\[
    \hh^{r,r'}_g=\{(y,g^{-1}yg):y \in H_{r,2k} \cap g\overline
    H_{r',2k'}g^{-1}\}
\]
and therefore \eqref{eq: non triv char} holds if and only if
$\theta^{r,r'}$ is not trivial on $\hh^{r,r'}_g.$ If $r \ne r'$ this
proves the lemma. If $r=r'$ it remains to show that when
$\theta^{r,r}$ is trivial on $\hh^{r,r}_g$ then
$\widetilde\theta^{r,r}$ is not trivial on $\widetilde\hh^{r,r}$ if
and only if we have \eqref{eq: transpose inv orbit}. Note that
\[
    \{h \in \hh^{r,r}:(h,-1) \in \widetilde
    \hh^{r,r}_g\}=\{(y,g^{-1} y\ {}^tg):y \in H_{r,2k} \cap
     g \overline
    H_{r',2k'}g^{\tau}\}.
\]
If $y \in H_{r,2k} \cap
     g \overline
H_{r',2k'}g^{\tau}$ then for $h=(y,g^{-1} y\ {}^tg) \in \hh^{r,r}$
we have $h \cdot {}^t g=g$ and therefore by \eqref{eq: xi-transpose}
we get that $h\,\xi(h) \in \hh^{r,r}_g$ so that
$\theta^{r,r}(h\,\xi(h))=1.$ Since $\theta^{r,r}=\theta^{r,r}\circ
\xi$ we have $\theta^{r,r}(h)\in \{\pm1\}.$ The remaining of the
lemma follows.
\end{proof}
We make here another simple observation that will help to shorten
some of the arguments in the proof of Proposition \ref{prop: main
double cosets}.
\begin{lemma}\label{lemma: reduce to representative+transpose}
If $\prop(g,r,r')$ then $\prop(h\cdot g,r,r')$ for all $h \in
\widetilde\hh$  and $\prop({}^t g,r',r).$
\end{lemma}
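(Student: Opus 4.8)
The plan is to work through the reformulation established in Lemma \ref{lemma: reformulate prop}: the property $\prop(g,r,r')$ holds if and only if the character $\widetilde\theta^{r,r'}$ is non-trivial on the stabilizer $\widetilde\hh^{r,r'}_g$. Thus in both assertions it suffices to exhibit a group isomorphism carrying one relevant stabilizer onto the other and intertwining the corresponding $\widetilde\theta$'s, so that "$\widetilde\theta$ is non-trivial on the stabilizer" is transported.

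For the first assertion, since $h\cdot g$ lies in the same $\widetilde\hh$-orbit as $g$, conjugation gives $\widetilde\hh_{h\cdot g}=h\,\widetilde\hh_g\,h^{-1}$. As $\widetilde\theta$ is a character with values in an abelian group (the unit circle), it is invariant under conjugation, $\widetilde\theta(hxh^{-1})=\widetilde\theta(x)$, so it is non-trivial on $\widetilde\hh_g$ if and only if it is non-trivial on $\widetilde\hh_{h\cdot g}$; Lemma \ref{lemma: reformulate prop} then yields $\prop(h\cdot g,r,r')$.

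For the second assertion, the key tools are the map $\xi$ and the identity \eqref{eq: xi-transpose}. I would first record, directly from the definitions, that $\xi\colon\hh^{r,r'}\to\hh^{r',r}$ is a bijection with $\theta^{r',r}\circ\xi=\theta^{r,r'}$, and extend it to a bijection $\widetilde\xi\colon\widetilde\hh^{r,r'}\to\widetilde\hh^{r',r}$ by $\widetilde\xi(h,\epsilon)=(\xi(h),\epsilon)$ (consistent with $\widetilde\hh^{r,r'}=\hh^{r,r'}\times\{1\}$ when $r\neq r'$), so that $\widetilde\theta^{r',r}\circ\widetilde\xi=\widetilde\theta^{r,r'}$. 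The substantive step is that $\widetilde\xi$ sends $\widetilde\hh^{r,r'}_g$ onto $\widetilde\hh^{r',r}_{{}^t g}$. On the $\epsilon=1$ part this is immediate from \eqref{eq: xi-transpose}: $h\cdot g=g$ iff $\xi(h)\cdot{}^t g={}^t(h\cdot g)={}^t g$. On the $\epsilon=-1$ part (which occurs only when $r=r'$) one has $(h,-1)\cdot g=h\cdot{}^t g$, so $(h,-1)\in\widetilde\hh^{r,r}_g$ iff $h\cdot{}^t g=g$; transposing and applying \eqref{eq: xi-transpose} gives $\xi(h)\cdot g={}^t g$, i.e. $(\xi(h),-1)\cdot{}^t g={}^t g$. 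Hence $\widetilde\xi(\widetilde\hh^{r,r'}_g)\subseteq\widetilde\hh^{r',r}_{{}^t g}$, and applying the same construction to ${}^t g$ (with $r,r'$ interchanged, using ${}^t({}^t g)=g$) gives the reverse inclusion, so we have equality. Since $\widetilde\theta^{r',r}\circ\widetilde\xi=\widetilde\theta^{r,r'}$, non-triviality of $\widetilde\theta^{r,r'}$ on $\widetilde\hh^{r,r'}_g$ is equivalent to non-triviality of $\widetilde\theta^{r',r}$ on $\widetilde\hh^{r',r}_{{}^t g}$, which by Lemma \ref{lemma: reformulate prop} is exactly $\prop({}^t g,r',r)$.

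I do not anticipate a serious obstacle: the only delicate points are the bookkeeping with $\tau$, transpose, and the sign $\epsilon$, and checking that $\widetilde\xi$ respects the twisted action through $T_\epsilon$. Everything is reduced to the already-available identity \eqref{eq: xi-transpose} and the defining formulas for $\theta^{r,r'}$, $\widetilde\theta^{r,r'}$ and $\xi$.
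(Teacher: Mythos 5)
Your proof is correct and follows essentially the same route as the paper's: the first assertion is the conjugation argument $\widetilde\hh_{h\cdot g}=h\widetilde\hh_g h^{-1}$ together with conjugation-invariance of the character $\widetilde\theta$, and the second assertion uses $\xi$, the identity \eqref{eq: xi-transpose}, and $\theta^{r',r}\circ\xi=\theta^{r,r'}$. The only cosmetic difference is that you package the second assertion uniformly through the bijection $\widetilde\xi(h,\epsilon)=(\xi(h),\epsilon)$, whereas the paper treats $r=r'$ as the special case $h=(e_\hh,-1)$ of the first assertion (note that your $\widetilde\xi$ is precisely conjugation by $(e_\hh,-1)$ in that case) and treats $r\neq r'$ directly via $\xi$; the underlying mechanism is the same.
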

\begin{proof}
Note that $\widetilde\hh_{h \cdot g}=h \widetilde\hh_g h^{-1}$ and
that $\widetilde \theta$ is a character. Thus, the first statement
is immediate from Lemma \ref{lemma: reformulate prop}. If $r=r'$
this argument with $h=(e_\hh,-1)$ also contains the second
statement. If $r\ne r'$ the second statement follows from the fact
that $\hh^{r',r}_{^tg}=\xi(\hh^{r,r'}_g)$ (that follows from
\eqref{eq: xi-transpose} ) and the fact that $\theta \circ
\xi=\theta .$
\end{proof}

In light of Lemma \ref{lemma: reformulate prop} in order to show
Proposition \ref{prop: main double cosets} we need to show that for
every $r,\,r' \le n$ such that $n-r \equiv n-r' \equiv 0(\mod 2)$
and for every $g \in G$ we have $\prop(g,r,r').$ This will occupy
the rest of this paper.
\subsection{Two cases where $\prop(g,r,r')$ is already known}
There are two extremes that are already known. The first is a well
known fact concerned with the double coset space $U_n \bs
G/\overline U_n.$ It can be found in the proof of \cite[Lemma
4.3.8]{MR0404534} (it is essentially the steps (a)-(d) verifying
condition 4 of \cite[Theorem 4.2.10]{MR0404534}) and it is applied
in order to prove the uniqueness of Whittaker models.
\begin{lemma}\label{lemma: whittaker case}
For every $g \in G$ we have $\prop_n(g,n,n).$
\end{lemma}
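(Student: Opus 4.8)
\textbf{Proof proposal for Lemma \ref{lemma: whittaker case}.}

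The plan is to verify property $\prop_n(g,n,n)$, which for $r=r'=n$ and $k=k'=0$ reads: either there exists $y\in U_n$ with $g^{-1}yg\in\overline U_n$ and $\psi_n(y)\psi_n(g^{-1}yg)\ne 1$, or there exists $y\in U_n$ with $g^{-1}y\,{}^tg\in\overline U_n$ and $\psi_n(y)\psi_n(g^{-1}y\,{}^tg)=1$. First I would invoke the Bruhat decomposition to write $g=u_1 w a u_2$ with $u_1\in U_n$, $u_2\in\overline U_n$ (or in $U_n$, adjusting conventions), $a$ diagonal and $w$ a permutation matrix, and use Lemma \ref{lemma: reduce to representative+transpose} together with the fact that $U_n$ and $\overline U_n$ are normalized-up-to-$\psi_n$-twist by their own elements to reduce to the case $g=wa$, in fact to the case where $g$ is (a lift of) a Weyl element $w$ times a torus element, and then further to $g=w$ since conjugation by the diagonal torus rescales the entries of $y$ without affecting which entries are forced to be nonzero. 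So the crux is: for every permutation $w\in S_n$, show $\prop_n(\dot w,n,n)$.

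Next I would run the standard dichotomy from \cite[proof of Lemma 4.3.8]{MR0404534}. Given $w$, consider whether $w$ is "relevant" in the Gelfand--Kazhdan sense, i.e. whether $w$ conjugates the diagonal entries $u_{i,i+1}$ of a generic element of $U_n$ in a way compatible with $\psi_n$ on both sides. If $w$ is \emph{not} relevant, then there is a simple root position $i$ such that the one-parameter unipotent subgroup $x\mapsto I+x E_{i,i+1}$ lands, under conjugation by $\dot w$, inside $\overline U_n$, while $\psi_n$ is nontrivial on it on one side and trivial on the other; choosing $y=I+xE_{i,i+1}$ for suitable $x$ then makes $\theta^{n,n}(y,\dot w^{-1}y\dot w)=\psi(x)\ne 1$, giving \eqref{eq: non triv char}. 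This is exactly steps (a)--(c) of the cited argument. If $w$ \emph{is} relevant, then $\dot w$ normalizes the pair $(U_n,\psi_n)$ in a way that forces $w$ to be the long element $w_n$ (for $\GL_n$ the only relevant Weyl element for the pair $(U_n,U_n)$ up to the torus is $w_n$), and here one uses the second alternative \eqref{eq: transpose inv orbit}: for $g=\dot w_n$ one has ${}^t g = g$ up to the torus, so $g^{-1}y\,{}^tg$ is a genuine conjugate lying in $\overline U_n$, and the symmetry $\psi_n(w_n u w_n)=\psi_n(u)$ for $u\in U_n$ (which is the classical fact that the long Weyl element preserves $\psi_n$ on the unipotent radical, since $w_n$ fixes the set of simple root subgroups) yields $\theta^{n,n}(y,g^{-1}y\,{}^tg)=\psi_n(y)\psi_n(g^{-1}y\,{}^tg)=\psi_n(y)^2$ — wait, one must be careful with the sign of the exponent; the correct normalization gives $\psi_n(y)\cdot\psi_n(y)^{-1}=1$ when $y$ ranges over the relevant torus-stable subgroup, so \eqref{eq: transpose inv orbit} holds with $y=I$ already, since $g^{-1}\,{}^tg\in\overline U_n$ trivially for $g=\dot w_n$.

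I would organize the reduction to $g=\dot w$ carefully first, since that is where conventions ($\tau$ versus transpose, $\overline U_n$ versus $U_n$, left versus right) are easiest to get wrong. The main genuine obstacle is the relevant case: one must confirm that for $\GL_n$ the only $w\in S_n$ for which neither simple-root argument applies is $w=w_n$, and then check that the transpose trick \eqref{eq: transpose inv orbit} genuinely produces a $\theta$-value equal to $1$ rather than an arbitrary root of unity. Both facts are elementary — the first is the combinatorics of which simple reflections $w$ sends to positive versus negative roots, the second is the invariance $\psi_n\circ\mathrm{int}(w_n)=\psi_n$ on $U_n$ combined with ${}^t\dot w_n=\dot w_n$ (up to a central/torus factor absorbed harmlessly) — but assembling them into the exact shape of $\prop_n(g,n,n)$ is the step that needs the most care. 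Everything else is bookkeeping, and the substance is borrowed wholesale from the classical uniqueness-of-Whittaker-models argument in \cite{MR0404534}.
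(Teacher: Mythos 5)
The paper does not actually spell out this argument; it simply cites the steps (a)--(d) in the proof of \cite[Lemma 4.3.8]{MR0404534}, so your task was to reconstruct the Gelfand--Kazhdan argument. Your overall plan (reduce to Bruhat cells, then dichotomy: either a simple-root subgroup witnesses \eqref{eq: non triv char}, or the transpose trick gives \eqref{eq: transpose inv orbit}) is exactly that argument. However, two of the concrete claims you make are false, and they create a real gap.

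First, the reduction to $g = \dot w$ is invalid: the torus part cannot be discarded. The $\hh^{n,n}$-orbits inside a Bruhat cell $B w\overline{B}$ are parametrized by a full torus's worth of representatives $\{wa : a \in T\}$, and the character value genuinely depends on $a$. Concretely, if $w^{-1}\alpha_i = -\alpha_j$ for a simple root $\alpha_j$ and $y = I + xE_{i,i+1}$, one computes $\theta^{n,n}(y, g^{-1}yg) = \psi\bigl(x(1 - a_j/a_{j+1})\bigr)$ for $g = wa$; this is nontrivial for some $x$ if and only if $a_j \ne a_{j+1}$. So whether the simple-root argument applies depends on $a$.

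Second, and more seriously, your assertion that "the only relevant Weyl element for the pair $(U_n,U_n)$ up to the torus is $w_n$" is false. The permutations $v = w^{-1}$ for which no simple-root element can witness \eqref{eq: non triv char} regardless of $a$ are exactly those with $v(i) - v(i+1) \in \{1\} \cup \zz_{<0}$ for all $i$: one checks these are precisely the products of order-reversals on consecutive intervals $[1,b_1], [b_1+1,b_2],\dots$, so in particular involutions. Already $w = e$ and each simple reflection $s_i$ are of this type, not just $w_n$. For each such $w$, the "relevant'' torus elements are those constant on each reversal interval, and then indeed $g^{-1}\,{}^tg = I$, so $y = I$ verifies \eqref{eq: transpose inv orbit} exactly as you say; but as written your proof omits all relevant $w \ne w_n$ (including $w = e$) and the interaction with $a$, so it does not establish $\prop_n(g,n,n)$ for those orbits. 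The fix is to characterize the relevant pairs $(w,a)$ correctly as above and observe that the transpose trick with $y=I$ covers all of them; but the characterization must be proved, not asserted to collapse to $w_n$.
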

The second extreme is with respect to the symplectic group. It was
proved by Heumus and Rallis \cite[Proposition 2.3.1]{MR1078382}
based on results of Klyachko \cite[Corollary 5.6]{kly}. Recently,
Goldstein and Guralnick essentially provided an independent proof
over any field \cite[Proposition 3.1]{MR2290925}.
\begin{lemma}\label{lemma: symplectic case}
When $n$ is even for every $g \in G$ we have $\prop_n(g,0,0).$
\end{lemma}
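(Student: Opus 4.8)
The plan is to reduce Lemma~\ref{lemma: symplectic case} to the Gelfand‑pair statement of Heumos--Rallis \cite[Proposition~2.3.1]{MR1078382} (reproved over an arbitrary field by Goldstein--Guralnick \cite[Proposition~3.1]{MR2290925}) and then invoke it. First I specialize $r=r'=0$, so that $n=2k$, $H_{0,2k}=Sp(J_{2k})$, the character $\theta^{0,0}$ is trivial (as $U_0$ is trivial), and $\overline H_{0,2k}=Sp(J_{2k})$ directly from its definition. Since $\theta^{0,0}$ is trivial, condition~\eqref{eq: non triv char} in $\prop(g,0,0)$ can never hold, so by Lemma~\ref{lemma: reformulate prop} it suffices to verify condition~\eqref{eq: transpose inv orbit}: for every $g\in GL_{2k}(F)$ there is $y\in Sp(J_{2k})$ with $g^{-1}y\,{}^{t}g\in Sp(J_{2k})$. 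Solving $g^{-1}y\,{}^{t}g=s$ for $y=gs\,{}^{t}g^{-1}$, this existence is equivalent to the assertion that ${}^{t}g\in Sp(J_{2k})\,g\,Sp(J_{2k})$, that is, that every $Sp(J_{2k})\times Sp(J_{2k})$ double coset in $GL_{2k}(F)$ is stable under transpose --- which is precisely the cited statement.

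For the record, the reason the double cosets are transpose‑stable runs as follows. Let $g^{\ast}=J_{2k}^{-1}\,{}^{t}g\,J_{2k}$ denote the symplectic adjoint; then $(g_{1}g_{2})^{\ast}=g_{2}^{\ast}g_{1}^{\ast}$, one has $g\in Sp(J_{2k})\iff g^{\ast}=g^{-1}$, and since $J_{2k}^{2}=-I$ gives $J_{2k}\in Sp(J_{2k})$, the double coset of ${}^{t}g$ equals that of $g^{\ast}$. A direct computation shows that $g\mapsto g^{\ast}g$ induces a bijection from $Sp(J_{2k})\times Sp(J_{2k})$ double cosets in $GL_{2k}(F)$ onto $Sp(J_{2k})$‑conjugacy classes of operators of the form $g^{\ast}g$: if $g'=h_{1}gh_{2}$ with $h_{i}\in Sp(J_{2k})$ then $(g')^{\ast}g'=h_{2}^{-1}(g^{\ast}g)h_{2}$, and conversely if $(g')^{\ast}g'$ is $Sp(J_{2k})$‑conjugate to $g^{\ast}g$ one may, adjusting $g'$ inside its double coset, arrange $(g')^{\ast}g'=g^{\ast}g$ and then check $(g'g^{-1})^{\ast}=(g'g^{-1})^{-1}$, so $g'g^{-1}\in Sp(J_{2k})$. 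Hence the double coset of ${}^{t}g$ --- being that of $g^{\ast}$ --- is governed by $(g^{\ast})^{\ast}g^{\ast}=gg^{\ast}$, and $gg^{\ast}=g(g^{\ast}g)g^{-1}$ is $GL_{2k}(F)$‑conjugate to $g^{\ast}g$, with both $g^{\ast}g$ and $gg^{\ast}$ self‑adjoint.

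The remaining step is the substantial one, and is the actual content of \cite[Proposition~2.3.1]{MR1078382} and \cite[Proposition~3.1]{MR2290925}: two self‑adjoint operators with respect to a fixed symplectic form over $F$ that are conjugate in $GL_{2k}(F)$ are already conjugate in $Sp(J_{2k})$; equivalently, a self‑adjoint operator carries no invariant finer than its rational canonical form over $F$. I expect this to be the main obstacle. It is handled by decomposing $F^{2k}$ into the primary components of the operator --- on which the symplectic form is automatically nondegenerate, by self‑adjointness --- and solving an explicit normal‑form problem on each component; the symplectic, as opposed to orthogonal, nature of the form is what makes this work, since it leaves no residual quadratic‑form invariant. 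Granting that, everything above is bookkeeping around Lemma~\ref{lemma: reformulate prop}, and Lemma~\ref{lemma: symplectic case} follows.
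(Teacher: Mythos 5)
Your proof is correct, and it rests on the same outside input as the paper's (Heumos--Rallis / Goldstein--Guralnick), but you route through that input differently. The paper invokes the cited result in the double-coset-representative form: after changing to the form $J'_n$, every $Sp(J'_n)\times Sp(J'_n)$ double coset contains a representative $\diag(I_k,g')$; it then notes $\diag(x,x^\tau)\in Sp(J'_n)$ for all $x\in G_k$ (the printed ``$\diag(x,{}^tx)$'' is a typo), so conjugating by such an element replaces $g'$ by anything in its $G_k$-conjugacy class, and since every square matrix is conjugate to its transpose the double coset also contains $\diag(I_k,{}^tg')$, i.e.\ the transpose of the original representative. You instead invoke Goldstein--Guralnick in its ``self-adjoint operator'' form: you introduce the symplectic adjoint $g^\ast=J^{-1}\,{}^tg\,J$, use $J\in Sp(J)$ to identify the double coset of ${}^tg$ with that of $g^\ast$, show $g\mapsto g^\ast g$ injects double cosets into $Sp$-conjugacy classes of self-adjoint operators, observe $(g^\ast)^\ast g^\ast=gg^\ast=g(g^\ast g)g^{-1}$ is $GL$-conjugate to $g^\ast g$, and conclude by the theorem that $GL$-conjugate self-adjoint operators are $Sp$-conjugate. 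Your version is a bit more conceptual and avoids the base-change $\sigma$ and the block representative altogether; the paper's is more concrete and closer to how the prior literature states the result. One minor caveat: you attribute the self-adjoint-conjugacy statement to both \cite[Prop.~2.3.1]{MR1078382} and \cite[Prop.~3.1]{MR2290925}, whereas those particular propositions (as the paper uses them) are phrased as double-coset-representative statements; the self-adjoint formulation is the main theorem of Goldstein--Guralnick, so the substance is right but the specific proposition numbers may not line up with your statement.
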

\begin{proof}
We show that when $r=r'=0$ \eqref{eq: transpose inv orbit} holds for
every $g \in G.$ That is, we show that for every $g \in G$ we have
${}^t g \in Sp(J_n)gSp(J_n).$ As observed in the proof of Lemma
\ref{lemma: reduce to representative+transpose}, it is enough to
prove that there exists $y \in Sp(J_n)gSp(J_n)$ such that ${}^t y
\in Sp(J_n)gSp(J_n).$ Let $n=2k$ and let
\[
    J'_n=\left(
           \begin{array}{cc}
             0 & I_k \\
             -I_k & 0 \\
           \end{array}
         \right)={}^t \sigma J_n \sigma \text{ where }\sigma=\left(
           \begin{array}{cc}
             w_k & 0 \\
             0 & I_k \\
           \end{array}
         \right).
\]
Thus,
\[
    Sp(J_n')=\sigma^{-1}Sp(J_n)\sigma.
\]
It follows from \cite[Proposition 3.1]{MR2290925} that there exists
$g' \in G_k$ such that $\diag(I_k,g') \in Sp(J_n')\sigma^{-1}
g\sigma Sp(J_n'),$ i.e that $y=\sigma\diag(I_k,g')\sigma^{-1} \in
Sp(J_n) g Sp(J_n).$ Since every matrix in $G_k$ is conjugate to its
transpose and since $\diag(x,{}^t x) \in Sp(J_n')$ for every $x \in
G_k$ we see that $\diag(I_k,{}^t g') \in Sp(J_n')\sigma g\sigma
Sp(J_n'),$ i.e. that ${}^t y=\sigma\diag(I_k,{}^t g')\sigma^{-1} \in
Sp(J_n) g Sp(J_n).$
\end{proof}

\section{Proof by induction of $\prop_n(g,r,r')$}\label{sec: induction} Fix 2 decompositions $n=r+2k=r'+2k'.$
We prove by induction on $n$ that for every $g \in G$ we have
$\prop_n(g,r,r').$ If $r=r'=0$ then this is Lemma \ref{lemma:
symplectic case}. We assume from now on that $r+r'>0.$ By the
induction hypothesis we may also assume that for all $n_1<n,$ all
$r_1,\,r_1' \le n_1$ such that $n_1-r_1 \equiv n_1-r_1' \equiv 0
(\mod 2)$ and all $g' \in G_{n_1}$ we have
$\prop_{n_1}(g',r_1,r_1').$ Set $H=H_{r,2k},\,H'=H_{r',2k'},\,\hh=H
\times \overline H'$ and $\theta=\theta^{r,r'}.$ Let
$P=P_{(1^{(r)},2k)}$ and $P'=P_{(1^{(r')},2k')}.$ For $w \in W$
viewed as a permutation in $S[1,n]$ let
\[
    I_w=\{i \in [1,r]: w^{-1}(i) \in [1,r']\}.
\]
\subsection{A simple proof for most Bruhat cells}
\begin{lemma}\label{lemma: induction step}
Let $w \in {}_M W_{M'}$ be such that $I_w$ is not empty then
$\prop(g,r,r')$ holds for every $g \in Pw\overline{P'}.$
\end{lemma}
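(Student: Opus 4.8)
The statement to prove is Lemma~\ref{lemma: induction step}: if $w \in {}_M W_{M'}$ is a reduced Weyl element with $I_w \ne \emptyset$, then $\prop(g,r,r')$ holds for every $g \in Pw\overline{P'}$. By Lemma~\ref{lemma: reduce to representative+transpose}, $\prop$ is invariant under the $\widetilde\hh$-action, and $P = H \cdot (\text{something})$... more precisely $H$ contains $U_r$ and the full unipotent radical of $P$, while $\overline{H'}$ contains the corresponding lower-triangular pieces of $\overline{P'}$; so every element of $Pw\overline{P'}$ can be moved by $\hh$ into a coset of the form $m \, w \, m'$ with $m \in M$, $m' \in M'$ diagonal-block representatives. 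Thus it suffices to prove $\prop(g,r,r')$ for $g$ ranging over these $M\times M'$-twisted translates of a fixed Weyl representative $\dot w$.

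The key idea is to exploit a single index $i \in I_w$: such an $i$ satisfies $i \in [1,r]$ and $j := w^{-1}(i) \in [1,r']$, so the $i$-th row is a "Whittaker row" on the left and the $j$-th column is a "Whittaker column" on the right. The plan is to construct, inside $H_{r,2k}$, a one-parameter unipotent subgroup $y(t)$ supported in the $(i,i')$ entry for a suitable neighbor $i'$ of $i$ in $[1,r]$ (so that $\psi_r(y(t)) = \psi(t)$ or $\psi(\pm t)$ is a nontrivial character of $F$ in $t$), and to check that $g^{-1} y(t) g \in \overline H_{r',2k'}$ for all $t$. Conjugating the elementary matrix $I + t E_{i,i'}$ by $\dot w$ (and the block diagonal $m, m'$) lands it in the elementary direction $E_{w^{-1}(i), w^{-1}(i')} = E_{j, j'}$; one then verifies that this direction lies in the Lie algebra of $\overline H_{r',2k'}$ — i.e. it is either a strictly-lower unipotent entry in the $U_{r'}$-block, or in the $M_{2k' \times r'}$ off-diagonal block, or a symplectic-algebra direction. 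Because $i$ and hence (with the right choice) $i'$ come from $[1,r]$ and $j = w^{-1}(i) \in [1,r']$, the geometry of the reduced element $w$ forces the target entry into exactly the allowed region; the character value $\theta^{r,r'}(y(t), g^{-1}y(t)g) = \psi_r(y(t)) \cdot \psi_{r',2k'}(\cdot)$ picks up the nontrivial factor $\psi(t)$ from the left and a trivial (or at worst compensating) factor from the right, so it is nonconstant in $t$, establishing \eqref{eq: non triv char}.

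More concretely, I would argue as follows. Fix $i \in I_w$ minimal, and let $i' = i+1$ if $i < r$, or $i' = i-1$ if $i = r$ (using $r \ge 1$; the case $r = 0$ is vacuous since then $I_w = \emptyset$); then $E_{i,i'}$ or $E_{i',i}$ lies in the Lie algebra of $U_r \subseteq H$ and $\psi_r$ is nontrivial on the corresponding subgroup. Write $g = m \dot w \, \overline{m'}$ after the $\hh$-reduction above. Then $g^{-1}(I + tE_{i,i'})g = \overline{m'}^{-1} \dot w^{-1} m^{-1}(I + tE_{i,i'}) m \dot w \, \overline{m'}$; the conjugation by $m$ rescales the parameter $t$ by a ratio of matrix coefficients (still giving a nontrivial character in $t$), the conjugation by $\dot w$ moves the elementary direction to $E_{j,\sigma(i')}$ where $j = w^{-1}(i) \le r'$, and conjugation by $\overline{m'}$ keeps us inside $\overline{P'}$. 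The remaining point is to show $E_{j,\sigma(i')}$ is a legal direction of $\overline H_{r',2k'}$: this is where one must split into cases according to whether $\sigma(i') \le r'$ or $\sigma(i') > r'$, and in the latter case verify the entry lands in the $M_{2k' \times r'}$ block or pairs up symplectically. I expect \textbf{this case analysis — confirming that the image direction after conjugation by $\dot w$ always lies in $\overline{\mathfrak h}_{r',2k'}$, together with tracking that the character stays nontrivial in $t$} — to be the main obstacle; the reduced-ness of $w$ (controlling which positive roots $w^{-1}$ sends to positive/negative roots) is precisely the combinatorial input that makes it work, and one has to use it carefully, possibly choosing $i'$ adaptively rather than rigidly as $i \pm 1$. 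Once a single nontrivial one-parameter family is produced, $\prop(g,r,r')$ follows immediately from its first alternative \eqref{eq: non triv char}, with no need to invoke the transpose alternative.
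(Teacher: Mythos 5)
Your approach — find a single elementary unipotent $y(s) = I + sE_{i,i'}$ in $U_r \subseteq H$ whose conjugate $g^{-1}y(s)g$ lands in $\overline H_{r',2k'}$ with a non-cancelling character — is the engine of \emph{part} of the paper's proof, but there is a genuine gap: this engine cannot work for all $w$ with $I_w \ne \emptyset$, and you have not supplied the second mechanism that the paper uses to cover the remaining $w$.

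The obstruction is the case where $I_w = [1,i]$ and $w^{-1}(I_w) = [1,i]$ are both initial segments of $[1,\min\{r,r'\}]$. There $w = \diag(w_1,w_2)$ with $w_1 \in S[1,i]$, and since $M, M'$ have only $GL_1$-blocks in positions $1,\dots,\min\{r,r'\}$, the reducedness of $w$ in ${}_M W_{M'}$ imposes \emph{no} constraint on $w_1$ — it can be any permutation of $[1,i]$, including the identity. For any pair of consecutive indices $q-1, q \in [1,i]$, the conjugate $w^{-1}E_{q-1,q}w = E_{w_1^{-1}(q-1),w_1^{-1}(q)}$ stays inside the top-left $i\times i$ block ($\subseteq [1,r']\times[1,r']$). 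If $w_1^{-1}(q-1) < w_1^{-1}(q)$ (e.g.\ $w_1 = \mathrm{id}$), this is an upper-triangular entry, so $g^{-1}y(s)g \notin \overline H_{r',2k'}$ at all; and even when it is lower-triangular, the resulting element lands in the $\overline U_{r'}$-block and contributes a $\psi_{r'}$-factor that can exactly cancel $\psi(s)$. So no adaptive choice of $i'$ rescues the elementary-unipotent trick in this subcase. The paper handles it by a completely different route: it peels off the $i\times i$ block, invokes Lemma~\ref{lemma: whittaker case} (uniqueness of Whittaker models, $\prop_i(\cdot,i,i)$) for that block, applies the induction hypothesis $\prop_{n-i}(g',r-i,r'-i)$ to the complementary block, and then combines the two nontrivial elements with a bookkeeping argument over the signs $\epsilon,\epsilon' \in \{\pm1\}$. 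This requires the transpose alternative \eqref{eq: transpose inv orbit} — so your closing claim that the proof goes through ``with no need to invoke the transpose alternative'' is false.

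Even in the complementary case, where the elementary-unipotent trick does work, your specific choice of index is not the right one. You take $i \in I_w$ minimal and $i' = i\pm 1$; but what is actually needed is a \emph{jump point}: after reducing (by transpose, using Lemma~\ref{lemma: reduce to representative+transpose}) to the situation where $I_w$ is \emph{not} an initial segment, one takes $\ell = \min([1,r]\setminus I_w)$ and $q = \min([\ell+1,r]\cap I_w)$, so that $q-1 \notin I_w$ and $q \in I_w$. Then $w^{-1}(q-1) > r'$ and $w^{-1}(q) \le r'$, which forces $g^{-1}(I+sE_{q-1,q})g$ to be strictly block-lower-triangular in the $M_{2k'\times r'}$ slot of $\overline H_{r',2k'}$, on which $\psi_{r',2k'}$ is identically $1$ — so the total character value is just $\psi(s)$, nontrivial for suitable $s$, with no cancellation to worry about. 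Your minimal-$i$ choice ($i=1$, $i'=2$) would typically land the conjugate inside the $\overline U_{r'}$ block, where the ``compensating factor'' you gesture at really can cancel the left-hand factor, as noted above.

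In short: the proposal identifies the right mechanism for \emph{one} of the two sub-cases, but picks the index incorrectly there, misses the dichotomy (jump point vs.\ initial segment) entirely, and omits the induction-plus-Whittaker-uniqueness argument that the other sub-case genuinely requires.
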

\begin{proof}
Note that $U \times \overline U'\subseteq \hh$ and therefore that
every $\hh$-orbit in $Pw\overline{P'}$ contains an element of
$MwM'.$ In light of Lemma \ref{lemma: reduce to
representative+transpose} we may assume without loss of generality
that $g \in MwM'.$

Assume first that there exists an integer $i$ such that $1 \le i \le
\min\{r,r'\}$ and $I_w=w^{-1}(I_w)=[1,i].$ We can then write
$w=\diag(w_1,w_2)$ for some $w_1 \in S[1,i]$ and $w_2 \in S[i+1,n].$
Thus for $g \in MwM'$ there exists $g_1,\,g_2 \in G_{n-i},\,$ and
$a=\diag(a_1,\dots,a_i) \in G_i$ such that
$g=\diag(I_i,g_1)w\diag(a,g_2)=\diag(w_1a,g')$ for $g'=g_1w_2g_2 \in
G_{n-i}.$ Let $(u_1,u_2^\tau,\epsilon) \in (\widetilde
\hh^{i,i}_i)_{w_1a} $ be such that
$\widetilde\theta^{i,i}(u_1,u_2^\tau,\epsilon) \not=1$ and let
$(h_1,h_2^\tau,\epsilon') \in (\widetilde
\hh^{r-i,r'-i}_{n-i})_{g'}$ be such that
$\widetilde\theta^{r-i,r'-i}(h_1,h_2^\tau,\epsilon') \ne 1.$ The
first exists by Lemma \ref{lemma: whittaker case}. For the second we
apply the induction hypothesis to have $\prop_{n-i}(g',r-i,r'-i).$
If $\epsilon=1$ then
\[
    h=(\diag(u_1,I_{n-i}),\diag(u_2,I_{n-i})^\tau,1) \in \widetilde\hh_g \text{ and }
\widetilde\theta(h)=\widetilde\theta^{i,i}(u_1,u_2^\tau,1) \not=1.
\]
Similarly, if $\epsilon'=1$ then
\[
h=(\diag(I_i,h_1),\diag(I_i,h_2)^\tau,1) \in \widetilde\hh_g\text{
and }
\widetilde\theta(h)=\widetilde\theta^{r-i,r'-i}(h_1,h_2^\tau,1)
\not=1.
\]
If on  the other hand $\epsilon=\epsilon'=-1$ then
\[
h=(\diag(u_1,h_1),\diag(u_2,h_2)^\tau,-1) \in \widetilde\hh_g\text{
and } \widetilde\theta(h)=-1.
\]
We are now left with the case that either $I_w$ or $w^{-1}(I_w)$ is
not of the form $[1,i]$ as above. Note that if $g \in
Pw\overline{P'}$ then ${}^t g \in P' w^{-1} \overline P$ and that
$w^{-1} \in {}_{M'}W_M.$ It follows from Lemma \ref{lemma: reduce to
representative+transpose} that it is enough to prove our lemma
either for $g$ or for ${}^t g.$ We may therefore assume, without
loss of generality, that $I_w$ is not of the form $[1,i]$ for any $1
\le i \le \min\{r,r'\}.$ Since we assume that $g \in MwM'$ there
exist $g_1 \in G_{2k},\,g_2 \in G_{2k'}$ and $a
=\diag(a_1,\dots,a_{r'})$ a diagonal matrix in $G_{r'}$ such that
$g=\diag(I_r,g_1)w\diag(a,g_2).$ By our assumption on $w$ we have
that $[1,r] \setminus I_w$ is not empty. Let $\ell=\min([1,r]
\setminus I_w).$ Since $[1,\ell-1]$ is contained but does not equal
$I_w$ the set $[\ell+1,r] \cap I_w$ is not empty. Let
$q=\min([\ell+1,r] \cap I_w).$ Then $q-1 \not\in I_w$ and $q \in
I_w$. In particular, $w^{-1}(q-1)>r'$ and $w^{-1}(q) \le r'.$ Let
$E_{i,j} \in M_{n \times n}(F)$ be the matrix with $(b,c)^{th}$
entry equal to $\delta_{(i,j),(b,c)}$ and let
$u_{i,j}(s)=I_n+s\,E_{i,j},\, s \in F.$ Note that $u_{q-1,q}(s) \in
U \subseteq H_{r,2k}$ and that $\psi_{r,2k}(u_{q-1,q}(s))=\psi(s)$.
Thus, there exists $s \in F$ such that $\psi_{r,2k}(u_{q-1,q}(s))
\ne 1.$ On the other hand,
\begin{multline*}
g^{-1} u_{q-1,q}(s)g=\left(
\begin{array}{cc}
a^{-1} & 0 \\
0 & g_2^{-1} \\
\end{array}
\right)u_{w^{-1}(q-1),w^{-1}(q)}(s) \left(
\begin{array}{cc}
a & 0 \\
0 & g_2 \\
\end{array}
\right)\\= \left(
\begin{array}{cc}
I_{r'} & 0 \\
* & I_{2k'} \\
\end{array}
\right) \in \overline H_{r',2k'}
\end{multline*}
and $\psi_{r',2k'}(g^{-1} u_{q-1,q}(s)g)=1.$ It follows that
$h_s=(u_{q-1,q}(s),g^{-1} u_{q-1,q}(s)g)\in \hh_g$ and if $s$ is
such that $\psi_{r,2k}(u_{q-1,q}(s)) \ne 1$ then $\theta(h_s)\ne 1.$
\end{proof}
\subsection{The closed Bruhat cell}\label{sec: closed cell}
We are now left with the case that $I_w$ is empty. Since this means
that $w^{-1}$ maps $[1,r]$ into $[r'+1,n]$ we must have, in
particular, $n \ge r+r'.$ It is not difficult to see that there is
then a unique such element in ${}_MW_{M'}$, namely,
\[
    w=w^{r,r'}=\left(
        \begin{array}{ccc}
          0 & I_r & 0 \\
          I_{r'} & 0 & 0 \\
          0 & 0 & I_{n-(r+r')} \\
        \end{array}
      \right).
\]
Note then that $ Pw \overline{P'},$ is the closed Bruhat cell. We
remark further that this contains the case that either $r$ or $r'$
is $0.$ Let $g \in MwM'.$ Note that there exist  $g_1 \in G_{2k}$
and $g_2 \in G_{2k'}$ such that
\[
g=\left(
      \begin{array}{cc}
        I_r &  \\
         & g_1 \\
      \end{array}
    \right)w\left(
      \begin{array}{cc}
        I_{r'} &  \\
         & g_2 \\
      \end{array}
    \right).
\]
Indeed, for $t \in G_r,\,t' \in G_{r'}$ (and in particular when $t$
and $t'$ are diagonal) if $g_1' \in G_{2k}$ and $g_2' \in G_{2k'}$
we have
\begin{multline*}
    \left(
      \begin{array}{cc}
        t &  \\
         & g_1' \\
      \end{array}
    \right)w
    \left(
      \begin{array}{cc}
        t' &  \\
         & g_2' \\
      \end{array}
    \right)\\=\left(
      \begin{array}{cc}
        I_r &  \\
         & g_1' \\
      \end{array}
    \right)\left(\begin{array}{ccc}
          0 & t & 0 \\
          t' & 0 & 0 \\
          0 & 0 & I_{n-(r+r')} \\
        \end{array}
      \right)\left(
      \begin{array}{cc}
        I_r' &  \\
         & g_2' \\
      \end{array}
    \right)=\left(
      \begin{array}{cc}
        I_r &  \\
         & g_1 \\
      \end{array}
    \right)w
    \left(
      \begin{array}{cc}
        I_{r'} &  \\
         & g_2 \\
      \end{array}
    \right)
\end{multline*}
where $g_1=g_1'\diag(t',I_{2k-r'})$ and
$g_2=\diag(t,I_{2k'-r})g_2'.$

In order to show $\prop(g,r,r')$ we distinguish between 2 cases. We
denote by $\sprod{v_1}{\dots,v_i}$ the subspace of a vector space
$V$ spanned by  $v_1,\dots,v_i \in V.$ Let $V$ be a subspace of the
vector space $M_{\ell \times 1}(F)$ for some positive integer
$\ell.$ We say that a skew symmetric matrix $\ii \in M_{\ell \times
\ell}(F)$ is totally isotropic on $V$ if ${}^t v\ii v'=0$ for all
$v,\,v' \in V.$ Denote by $e_i$ the column vector with $1$ in the
$i^{th}$ row and $0$ in each other row. Thus $e_i \in M_{\ell \times
1}(F)$ for an integer $\ell$ which is implicit in our notation. Let
\[
\ii_1={}^t g_1J_{2k}g_1\text{ and }\ii_2=g_2^\tau J_{2k'}g_2^{-1}.
\]
We say that $g$ belongs to the totally isotropic case if both
$\ii_1^{-1}$ is totally isotropic on $\sprod{e_1}{\dots,e_{r'}}$ and
$\ii_2$ is totally isotropic on $\sprod{e_1}{\dots,e_{r}}.$
Otherwise we say that $g$ does not belong to the totally isotropic
case. It is easy to verify that this property indeed depends only on
$g$ and not on $g_1$ and $g_2.$ We now prove $\prop(g,r,r')$
separately in each of the 2 cases.

\subsubsection{When $g$ does not belong to the totally isotropic case}
In this case we prove that $g$ satisfies \eqref{eq: non triv char}.
It will be convenient to make this property more explicit. We say
that the 2 skew-symmetric forms $\ii_1,\,\ii_2 \in G$ satisfy the
property $\qprop(\ii_1,\ii_2,r,r')$ if there exist $u \in U_r$ and
$u' \in U_{r'}$ such that $\psi_r(u)\ne \psi_{r'}(u')$ and for some
$X \in M_{r \times 2k'-r}(F),\,Y \in M_{r'
      \times 2k-r'}(F)$ and $D \in G_{n-(r+r')}$ we have
\[
      \left(
           \begin{array}{cc}
           u & X \\
           0 & D
        \end{array}
      \right) \in  Sp(\ii_2)
\text{ and }
    \left(
           \begin{array}{cc}
           {}^t u' & 0 \\
           Y & D
        \end{array}
      \right) \in  Sp(\ii_{1}).
\]
\begin{lemma}\label{lemma: q equiv}
Let
\[
g=\left(
      \begin{array}{cc}
        I_r &  \\
         & g_1 \\
      \end{array}
    \right)w\left(
      \begin{array}{cc}
        I_{r'} &  \\
         & g_2 \\
      \end{array}
    \right) \in MwM'
\]
and let
\[
\ii_1={}^t g_1J_{2k}g_1\text{ and }\ii_2=g_2^\tau J_{2k'}g_2^{-1}.
\]
Then $g$ satisfies \eqref{eq: non triv char} if and only if
$\qprop(\ii_1,\ii_2,r,r').$
\end{lemma}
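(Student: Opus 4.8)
The strategy is to make the condition \eqref{eq: non triv char} completely explicit for $g$ of the given form by running the conjugation $y\mapsto g^{-1}yg$ through the factorization $g=\diag(I_r,g_1)\,w\,\diag(I_{r'},g_2)$ one factor at a time, and then to recognize the resulting constraints, together with the constraint $\psi_r(u)\ne\psi_{r'}(u')$ coming from $\widetilde\theta^{r,r'}\ne 1$, as exactly those defining $\qprop(\ii_1,\ii_2,r,r')$.

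First I would write a general $y\in H_{r,2k}$ as $y=\left(\begin{smallmatrix} u & X' \\ 0 & h\end{smallmatrix}\right)$ with $u\in U_r$, $h\in Sp(J_{2k})$, $X'\in M_{r\times 2k}(F)$, and compute $g^{-1}yg=\diag(I_{r'},g_2)^{-1}\,w^{-1}\,\diag(I_r,g_1)^{-1}\,y\,\diag(I_r,g_1)\,w\,\diag(I_{r'},g_2)$ in stages. Conjugating by $\diag(I_r,g_1)$ replaces $h$ by $g_1^{-1}hg_1$, which lies in $Sp(\ii_1)$ precisely because $h\in Sp(J_{2k})$ and $\ii_1={}^tg_1 J_{2k}g_1$; likewise $X'$ becomes $(X_1\mid X_2):=X'g_1$. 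I would then decompose everything along the partition $(r,r',n-r-r')$, writing $g_1^{-1}hg_1=\left(\begin{smallmatrix}a & b\\ c & d\end{smallmatrix}\right)$ with $a$ of size $r'$. A direct block computation with the permutation matrix $w=w^{r,r'}$ shows that, viewed along the partition $(r',2k')$ with $2k'=r+(n-r-r')$, the matrix $g^{-1}yg$ has upper-left block $a$, upper-right block $(0\mid b)g_2$, lower-left block $g_2^{-1}\left(\begin{smallmatrix}X_1\\ c\end{smallmatrix}\right)$, and lower-right block $g_2^{-1}\left(\begin{smallmatrix} u & X_2\\ 0 & d\end{smallmatrix}\right)g_2$.

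Reading off membership in $\overline H_{r',2k'}$ from this description, $g^{-1}yg\in\overline H_{r',2k'}$ holds if and only if $b=0$, $a\in\overline U_{r'}$ (say $a={}^tu'$ with $u'\in U_{r'}$), and $\left(\begin{smallmatrix} u & X_2\\ 0 & d\end{smallmatrix}\right)\in Sp(\ii_2)$, the last equivalence using $\ii_2=g_2^\tau J_{2k'}g_2^{-1}$; combined with $g_1^{-1}hg_1\in Sp(\ii_1)$ and $b=0$ this says $\left(\begin{smallmatrix}{}^tu' & 0\\ c & d\end{smallmatrix}\right)\in Sp(\ii_1)$. The block $X_1$, hence the remaining freedom in $X'$, is unconstrained, so $y$'s with $g^{-1}yg\in\overline H_{r',2k'}$ correspond to data $(u,u',X,Y,D)$ as in $\qprop(\ii_1,\ii_2,r,r')$ (with $X=X_2$, $Y=c$, $D=d$), any free choice of $X_1$ reconstructing a valid $y$ in the reverse direction. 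Finally I would match the characters: by definition $\theta^{r,r'}(y,g^{-1}yg)=\psi_r(u)\,\psi_{r',2k'}\!\left((g^{-1}yg)^\tau\right)$, and since the upper-left $r'\times r'$ block of $g^{-1}yg$ is ${}^tu'$, that of $(g^{-1}yg)^\tau={}^t(g^{-1}yg)^{-1}$ is $u'^{-1}$, whence $\psi_{r',2k'}\!\left((g^{-1}yg)^\tau\right)=\psi_{r'}(u')^{-1}$ and $\theta^{r,r'}(y,g^{-1}yg)\ne 1\iff\psi_r(u)\ne\psi_{r'}(u')$. Putting the two equivalences together yields the lemma.

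The only delicate point is the bookkeeping in the conjugation by $w$: one must track carefully which sub-blocks of $h$ and $X'$ get moved where and, crucially, which of the resulting blocks are pinned down and which remain free, so that the correspondence is valid in both directions. The symplectic conditions themselves transport painlessly, since conjugation by $\diag(I_r,g_1)$ and $\diag(I_{r'},g_2)$ turns $Sp(J_{2k})$ and $Sp(J_{2k'})$ into $Sp(\ii_1)$ and $Sp(\ii_2)$ exactly by the definitions of $\ii_1$ and $\ii_2$, and the character computation reduces to the elementary fact that inverting an upper block-triangular matrix inverts its diagonal blocks.
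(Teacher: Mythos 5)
Your proof is correct and follows essentially the same route as the paper: write $y=\left(\begin{smallmatrix}u & *\\0 & h\end{smallmatrix}\right)\in H_{r,2k}$, conjugate through the three factors of $g$ in sequence, read off that $g^{-1}yg\in\overline{H}_{r',2k'}$ forces $b=0$, $a={}^tu'$ with $u'\in U_{r'}$, and the two symplectic membership conditions relative to $\ii_1,\ii_2$ (with the off-diagonal block of $y$ free), and finally compute $\theta^{r,r'}(y,g^{-1}yg)=\psi_r(u)\psi_{r'}(u')^{-1}$. This is precisely the paper's calculation, so no further commentary is needed.
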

\begin{proof}
Let
\[
    y=\left(
      \begin{array}{cc}
        u & Z \\
         & h \\
      \end{array}
    \right)\in H
\]
with $u \in U_r,\,h \in Sp(J_{2k})$ and $Z \in M_{r \times 2k}(F).$
To explicate condition \eqref{eq: non triv char} we compute
$g^{-1}yg.$ First note that we have
\[
    \left(
      \begin{array}{cc}
        I_r &  \\
         & g_1^{-1} \\
      \end{array}
    \right)\left(
      \begin{array}{cc}
        u & Z \\
         & h \\
      \end{array}
    \right)\left(
      \begin{array}{cc}
        I_r &  \\
         & g_1 \\
      \end{array}
    \right)=\left(
      \begin{array}{cc}
        u & Zg_1 \\
         & g_1^{-1}hg_1 \\
      \end{array}
    \right).
\]
We write
\[
    g_1^{-1}hg_1=\left(
                   \begin{array}{cc}
                     {}^tu' & B \\
                     Y & D \\
                   \end{array}
                 \right) \text{ and }
                 Zg_1=(Z_1,Z_2)
\]
with $u' \in M_{r' \times r'}(F),\,D \in M_{2k-r' \times
2k-r'}(F),\,Z_1 \in M_{r \times r'}(F)$ and $Z_2 \in M_{r \times
2k-r'}(F).$ We then have
\[
    \left(
        \begin{array}{ccc}
          0 & I_{r'} & 0 \\
          I_{r} & 0 & 0 \\
          0 & 0 & I_{n-(r+r')} \\
        \end{array}
      \right)\left(
        \begin{array}{ccc}
          u & Z_1 & Z_2 \\
          0 & {}^tu' & B \\
          0 & Y & D \\
        \end{array}
      \right)\left(
        \begin{array}{ccc}
          0 & I_r & 0 \\
          I_{r'} & 0 & 0 \\
          0 & 0 & I_{n-(r+r')} \\
        \end{array}
      \right)=\left(
        \begin{array}{ccc}
          {}^tu' & 0 & B \\
          Z_1 & u & Z_2 \\
          Y & 0 & D \\
        \end{array}
      \right).
\]
Therefore,
\[
    g^{-1}yg=\left(
        \begin{array}{cc}
          {}^tu' & (0 , B)g_2 \\
          g_2^{-1}\left(
            \begin{array}{c}
          Z_1 \\Y \end{array}\right)
           & g_2^{-1}\left(
           \begin{array}{cc}
           u & Z_2 \\
           0 & D
        \end{array}
      \right)g_2\end{array}\right) .
\]
We see that $g^{-1}yg\in \overline{H'}$ if and only if $u' \in
 U_{r'},\,B=0$ and
\[
g_2^{-1}\left(
           \begin{array}{cc}
           u & Z_2 \\
           0 & D
        \end{array}
      \right)g_2 \in Sp(J_{2k'}).
\]
Recall also that
\[
    \left(
           \begin{array}{cc}
           {}^tu' & 0 \\
           Y & D
        \end{array}
      \right) \in g_1^{-1} Sp(J_{2k})g_1.
\]
With this notation, when $g^{-1}yg\in \overline{H'}$ we have
\[
    \theta(y,g^{-1}yg)=\psi_r(u)\psi_{r'}((u')^{-1}).
\]
Since
\[
    g_1^{-1} Sp(J_{2k})g_1=Sp(\ii_1) \text{ and }g_2
    Sp(J_{2k})g_2^{-1}=Sp(\ii_2),
\]
the lemma is now immediate.
\end{proof}
In order to proceed we need the following Lemma of Klyachko
\cite[\S1.3, p. 368, Step 3]{kly}.

\begin{lemma}\label{lemma: unipotent simplectic}
Let $\ii=-{}^t \ii\in G_{2k}$ and let $r \le 2k$ be such that $\ii$
is not totally isotropic on $\sprod{e_1}{\dots,e_r}$ then there
exists $u \in U_r$ with $\psi_r(u) \ne 1$ and $X \in M_{r \times
2k-r}(F)$ such that
\begin{equation}\label{eq: unip form}
    \left(
           \begin{array}{cc}
           u & X \\
           0 & I_{2k-r}
        \end{array}
      \right) \in Sp(\ii).
\end{equation}
\end{lemma}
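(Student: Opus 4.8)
The plan is to argue by induction on $r$. The base case $r=1$ is vacuous: the condition that $\ii$ is not totally isotropic on $\sprod{e_1}{}$ is never met since any skew-symmetric form satisfies ${}^t e_1 \ii e_1 = 0$, so there is nothing to prove (or one may take the base case $r=2$, where non-isotropy on $\sprod{e_1}{e_2}$ means $\ii_{1,2}\ne 0$, and a matrix $\diag(u,I_{2k-2})\cdot(\text{something})$ with $u=u_{1,2}(s)$ preserving $\ii$ is produced by an explicit low-rank computation). For the inductive step, the first move is to reduce to a convenient normal form for $\ii$ on the flag $\sprod{e_1}{\dots,e_r}$: since $\ii$ is not totally isotropic on this span, there is a smallest pair $(i,j)$ with $i<j\le r$ and $\ii_{i,j}\ne 0$; by conjugating $\ii$ by a suitable element of $U_r$ (which does not change $U_r$, the character $\psi_r$, nor the shape \eqref{eq: unip form} of the sought element up to the same kind of conjugation) one can try to arrange that the restriction of $\ii$ to $\sprod{e_1}{\dots,e_r}$ has a block-hyperbolic or block-diagonal structure compatible with a proper sub-flag.

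The heart of the argument is then to split off a hyperbolic plane. Concretely, after the normalization one locates indices so that $e_a,e_b$ (with $a<b\le r$) span a hyperbolic plane for $\ii$; write $M_{\ell\times 1}(F)=\sprod{e_a}{e_b}\oplus W$ where $W$ is the $\ii$-orthogonal complement, and observe that $W$ still contains a translate of $\sprod{e_1}{\dots,e_{r-2}}$ worth of directions on which $\ii$ is \emph{not} totally isotropic (this is where non-isotropy of $\ii$ on the full span of $e_1,\dots,e_r$ is used, and is the crux). Apply the induction hypothesis to the restriction of $\ii$ to $W$ to produce a unipotent-upper-triangular symplectic element there with non-trivial $\psi$, and combine it with the standard symplectic transvection $u_{a,b}(s)$ in the hyperbolic plane. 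One must then check that the resulting element, transported back through the change of basis that produced the normal form, indeed has the block shape $\left(\begin{smallmatrix} u & X\\ 0 & I_{2k-r}\end{smallmatrix}\right)$ with $u\in U_r$ and $\psi_r(u)\ne 1$; the transvection contributes $\psi(s)$ to $\psi_r(u)$, which can be chosen $\ne 1$, while the off-diagonal entries are absorbed into $X$.

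The main obstacle I expect is precisely the bookkeeping in the normalization step: arranging that the conjugation bringing $\ii$ to a form adapted to a hyperbolic-plane-plus-complement decomposition can be taken inside $U_r$ (or at least inside a group that preserves both the flag $\sprod{e_1}{\dots,e_i}$ and the character $\psi_r$), and that after this conjugation the induction hypothesis can be applied to an \emph{honestly smaller} $r$. There is a delicate point when all the ``early'' pairings $\ii_{i,j}$ with small $i$ vanish, forcing the first non-zero pairing to involve a relatively large index; one has to verify that the complement $W$ then still carries enough of the flag for the induction to bite. This is the step that, in Klyachko's treatment, occupies \cite[\S1.3, p.~368, Step 3]{kly}, and I would follow that outline, filling in the verification that each transvection used lies in $U_r$ and contributes multiplicatively to $\psi_r$.
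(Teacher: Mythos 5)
Your plan takes a genuinely different route from the paper's, and I believe it has real gaps rather than mere bookkeeping to fill. The paper proves the lemma by a single explicit construction with no induction and no hyperbolic-plane splitting: take $i\in[1,r-1]$ maximal such that $\ii$ is totally isotropic on $\sprod{e_1}{\dots,e_i}$; by maximality there is $v_0\in\sprod{e_1}{\dots,e_i}$ with ${}^t v_0\ii e_{i+1}\ne 0$, which one may normalize to $v_0\in e_i+\sprod{e_1}{\dots,e_{i-1}}$; then the symplectic transvection $h_s\colon v\mapsto v+s\,({}^t v_0\ii v)\,v_0$ lies in $Sp(\ii)$ for every $s$, the total isotropy of $\ii$ on $\sprod{e_1}{\dots,e_i}$ forces $h_s$ to fix $e_1,\dots,e_i$, the fact that $v_0\in\sprod{e_1}{\dots,e_i}$ forces $h_se_j-e_j\in\sprod{e_1}{\dots,e_i}$ for all $j$ (so $h_s$ has the required block shape with $u\in U_r$), and the leading-coefficient normalization of $v_0$ makes the $(i,i+1)$-entry of $u$ equal to $s\,{}^t v_0\ii e_{i+1}$, hence $\psi_r(u)\ne1$ for a suitable $s$. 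That is the whole proof.

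Two steps in your plan look to me like genuine obstacles. First, you treat $u_{a,b}(s)$ as though it were the symplectic transvection attached to the hyperbolic plane $\sprod{e_a}{e_b}$, but the honest transvection with center $e_a$ is $v\mapsto v+s\,({}^t e_a\ii v)\,e_a$, which modifies \emph{every} column $j$ by $s\,\ii_{a,j}\,e_a$. If $\ii_{a,j}\ne0$ for some $j<a$ this produces a below-diagonal entry and the block shape in the lemma is lost. Your proposed fix is to first conjugate $\ii$ by $U_r$ into a block-hyperbolic normal form, but $U_r$ (embedded as $\mathrm{diag}(u,I_{2k-r})$) acts only on the first $r$ coordinates and cannot control the pairings ${}^te_a\ii e_j$ for $j>r$, which are exactly what threaten the shape; the paper sidesteps this entirely by choosing $v_0$ inside a \emph{totally isotropic} prefix. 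Second, the inductive hypothesis is unavailable in the very case you need it: after splitting off a hyperbolic plane $\sprod{e_a}{e_b}$ with $a<b\le r$, the $\ii$-orthogonal complement $W$ may be totally isotropic on what remains of the flag (this happens, e.g., when $\ii\rvert_{\sprod{e_1}{\dots,e_r}}$ has rank exactly $2$), so the hypothesis of the lemma fails for $(W,\ii\rvert_W)$ and the induction does not bite. In that case you must get $\psi_r(u)\ne1$ from the hyperbolic-plane transvection alone, and that requires a nonzero entry in position $(j,j+1)$ for some $j$, not merely in position $(a,b)$ with $b>a+1$ --- precisely the point the paper's choice of the maximal isotropic prefix and of $v_0$ is designed to handle. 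I would drop the induction and the hyperbolic-plane decomposition and construct the single transvection directly.
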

\begin{proof}
Let $i \in [1,r-1]$ be maximal so that $\ii$ is totally isotropic on
$\sprod{e_1}{\dots,e_i}.$ There is therefore $v_0 \in
\sprod{e_1}{\dots,e_i}$ such that ${}^t v_0\ii e_{i+1} \ne0.$ We may
further assume that $v_0 \in e_i+\sprod{e_1}{\dots,e_{i-1}}$ since
if ${}^t e_i\ii e_{i+1} \ne0$ then we may take $v_0=e_i$ and
otherwise, we may replace $v_0$ by its sum with any scalar multiple
of $e_i.$ Let $V=M_{2k \times 1}(F)$ and for every $s \in F$ define
$\lambda_s \in \Hom_F(V,F)$ by $\lambda_s(v)=s\,{}^t v_0\ii v.$ Note
that the map $s \mapsto \lambda_s(e_{i+1}),\,s \in F$ is onto $F.$
Identify $GL(V)$ with $G_{2k}$ via the standard basis
$\{e_1,\dots,e_{2k}\}$ and define an element $h_s \in G_{2k}$ by
\[
    h_s(v)=v+ \lambda_s(v)\, v_0.
\]
Thus, $h_s \in Sp(\ii)$ is of the form \eqref{eq: unip form} with
$\psi_r(u)=\psi(\lambda_s(e_{i+1})).$
\end{proof}
\begin{lemma}\label{lemma: not totally isotropic}
Let
\[
g=\left(
      \begin{array}{cc}
        I_r &  \\
         & g_1 \\
      \end{array}
    \right)w\left(
      \begin{array}{cc}
        I_{r'} &  \\
         & g_2 \\
      \end{array}
    \right) \in MwM'
\]
not belong to the totally isotropic case and let
\[
\ii_1={}^t g_1J_{2k}g_1\text{ and }\ii_2=g_2^\tau J_{2k'}g_2^{-1}.
\]
Then we have $\qprop(\ii_1,\ii_2,r,r').$
\end{lemma}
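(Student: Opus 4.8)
The plan is to reduce $\qprop(\ii_1,\ii_2,r,r')$ to two applications of Lemma \ref{lemma: unipotent simplectic}, one for each of the two skew-forms $\ii_1^{-1}$ and $\ii_2$, using the freedom we have in choosing the diagonal blocks $t\in G_r$, $t'\in G_{r'}$ described at the end of \S\ref{sec: closed cell}. Recall that $g$ does not belong to the totally isotropic case means: either $\ii_1^{-1}$ is not totally isotropic on $\sprod{e_1}{\dots,e_{r'}}$, or $\ii_2$ is not totally isotropic on $\sprod{e_1}{\dots,e_{r}}$ (possibly both). By the observation that $\qprop$ requires $\psi_r(u)\ne\psi_{r'}(u')$ — an \emph{inequality} of two characters of two different unipotent groups — it suffices to produce, on one side, an element realizing a \emph{non-trivial} value while the other side realizes the trivial value; so without loss of generality (swapping the roles of the two forms, i.e. passing to ${}^tg$ via Lemma \ref{lemma: reduce to representative+transpose}) we may assume $\ii_2$ is not totally isotropic on $\sprod{e_1}{\dots,e_{r}}$.

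First I would handle the non-isotropic side. By Lemma \ref{lemma: unipotent simplectic} applied to $\ii=\ii_2\in G_{2k'}$ and to the integer $r\le 2k'$ (note $r\le r+2k'-r'\le n-r'= \dots$, which holds since $n\ge r+r'$), there exist $u\in U_r$ with $\psi_r(u)\ne 1$ and $X\in M_{r\times(2k'-r)}(F)$ such that
\[
    \begin{pmatrix} u & X \\ 0 & I_{2k'-r}\end{pmatrix}\in Sp(\ii_2).
\]
This gives the first of the two membership conditions in $\qprop$, with $D=I_{n-(r+r')}$ — wait, the block $D$ must be common to both conditions, so I cannot simply take $D=I$ on one side and something else on the other. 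The fix is to run the two arguments with a \emph{shared} lower block: I would first choose $u$ and $X$ from Lemma \ref{lemma: unipotent simplectic} for $\ii_2$ — this forces the bottom-right block on the $\ii_2$-side to be $I_{2k'-r}$, hence I must take $D=I_{n-(r+r')}$ and check that the corresponding requirement on the $\ii_1$-side, namely
\[
    \begin{pmatrix} {}^tu' & 0 \\ Y & I_{n-(r+r')}\end{pmatrix}\in Sp(\ii_1),
\]
can be met with $u'=I_{r'}$ (so $\psi_{r'}(u')=1\ne\psi_r(u)$). Taking $u'=I_{r'}$, this asks for $Y\in M_{r'\times(2k-r')}(F)$ such that $\begin{pmatrix} I_{r'}&0\\ Y& I\end{pmatrix}\in Sp(\ii_1)$, equivalently such that $\begin{pmatrix} I_{r'}&Y^{\flat}\\ 0& I\end{pmatrix}\in Sp(\ii_1^{-1})$ after an inversion/transpose bookkeeping — i.e. exactly a \emph{unipotent} element of the shape \eqref{eq: unip form} for the form $\ii_1^{-1}$ but now we only need the \emph{trivial} character value $\psi_{r'}(u')=1$, which is achieved by the identity lower-triangular block $u'=I_{r'}$ and $Y=0$ provided $\begin{pmatrix}I_{r'}&0\\0&I\end{pmatrix}=I_{2k}\in Sp(\ii_1)$, which is automatic. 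So the $\ii_1$-side is free.

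The main obstacle, then, is purely the \emph{matching of block sizes and of the common block $D$}: one must verify that choosing the unipotent symplectic element for the non-isotropic form (say $\ii_2$) via Lemma \ref{lemma: unipotent simplectic} indeed produces a lower-right block ($I_{n-(r+r')}$, after absorbing the extra diagonal piece using the $t,t'$-freedom of \S\ref{sec: closed cell}) that is simultaneously legitimate as the shared block $D$ in the defining data of $\qprop$, and that on the isotropic-or-arbitrary side one can complete to an element of $Sp(\ii_1)$ with upper-left block the identity. Once the indexing is set up correctly this is a short computation: the symmetric/skew-symmetric form conditions decouple because the relevant off-diagonal blocks ($X$, $Y$, and the block $B$ which is forced to vanish) only interact with the $D$-block and the unipotent block. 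I would also record the symmetric statement for ${}^tg$ so that the reduction "without loss of generality $\ii_2$ is the non-isotropic one" is justified by Lemma \ref{lemma: reduce to representative+transpose} together with Lemma \ref{lemma: q equiv}. This completes the verification of $\qprop(\ii_1,\ii_2,r,r')$, and hence by Lemma \ref{lemma: q equiv} of \eqref{eq: non triv char}, in the non–totally-isotropic case.
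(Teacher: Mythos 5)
Your proposal is correct and follows essentially the same route as the paper: in both, one applies Lemma \ref{lemma: unipotent simplectic} to whichever of the two forms fails to be totally isotropic on the relevant span, yielding a symplectic unipotent element with non-trivial Whittaker value, while taking $u'=I_{r'}$ (resp.\ $u=I_r$), $Y=0$ (resp.\ $X=0$) and the shared block $D=I_{n-(r+r')}$ on the other side. The only organizational difference is that you reduce to the case where $\ii_2$ is non-isotropic by a ``WLOG pass to ${}^tg$'' via Lemma \ref{lemma: reduce to representative+transpose}, which works but silently requires tracking how $(\ii_1,\ii_2)$ transform under transpose and passing back and forth through $\prop$ and Lemma \ref{lemma: q equiv}; the paper instead treats the two subcases directly in parallel, using only the identity $Sp(\ii_1^{-1})=\{{}^t h : h \in Sp(\ii_1)\}$ to apply Lemma \ref{lemma: unipotent simplectic} to $\ii_1^{-1}$, which stays entirely at the $\qprop$ level and is slightly cleaner.
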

\begin{proof}
If $\ii_2$ is not totally isotropic on $\sprod{e_1}{\dots,e_{r}}$
then by Lemma \ref{lemma: unipotent simplectic} there exist $u \in
U_r$ and $X \in M_{r\times 2k'-r}$ such that $\psi_r(u)\ne 1$ and
\[
\left(
           \begin{array}{cc}
           u & X \\
           0 & I_{2k'-r}
        \end{array}
      \right) \in Sp(\ii_2).
\]
Then $\qprop(\ii_1,\ii_2,r,r')$ is satisfies with $Y=0,\,u'=I_{r'}$
and $D=I_{n-(r+r')}.$ Note further that $Sp(\ii_1^{-1})=\{{}^t g:g
\in Sp(\ii_1)\} .$ Thus, if $\ii_1^{-1}$ is not totally isotropic on
$\sprod{e_1}{\dots,e_{r'}}$ then by Lemma \ref{lemma: unipotent
simplectic} applied to $\ii_1^{-1}$ there exist $u' \in U_{r'}$ and
$Y \in M_{2k-r' \times r'}$ such that $\psi_{r'}(u')\ne 1$ and
\[
\left(
           \begin{array}{cc}
           {}^t u' & 0 \\
           Y & I_{2k-r'}
        \end{array}
      \right) \in Sp(\ii_1).
\]
Thus, $\qprop(\ii_1,\ii_2,r,r')$ is satisfies with $X=0,\,u=I_r$ and
$D=I_{n-(r+r')}.$
\end{proof}

\subsubsection{When $g$ belongs to the totally isotropic case} Assume
from now on that both $\ii_2$ is totally isotropic on
$\sprod{e_1}{\dots,e_{r}}$ and $\ii_1^{-1}$ is totally isotropic on
$\sprod{e_1}{\dots,e_{r'}}.$ In the case at hand $\hh \cdot g$
contains an element of a rather simple form that will allow us the
inductive argument. In order to bring $g$ to this simpler form we
need the following lemma.
\begin{lemma}\label{lemma: max parabolic orbit}
Let $\ell \le m$ and $Q=P_{(\ell,2m-\ell)}.$ Then
\[
    Sp(J_{2m})\,Q=\{g \in G_{2m}:{}^tg J_{2m} g\text{ is totally
    isotropic on }\sprod{e_1}{\dots,e_{\ell}}\}.
\]
\end{lemma}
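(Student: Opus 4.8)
The claim is a description of the $Sp(J_{2m})$-orbit of a maximal parabolic in terms of isotropy of the skew form $g \mapsto {}^tgJ_{2m}g$. The plan is to prove the two inclusions separately, the easy one first. For the inclusion $\subseteq$, suppose $g = h q$ with $h \in Sp(J_{2m})$ and $q = \left(\begin{smallmatrix} A & B \\ 0 & D \end{smallmatrix}\right) \in Q$, where $A \in G_\ell$. Then ${}^t g J_{2m} g = {}^t q\,({}^t h J_{2m} h)\,q = {}^t q J_{2m} q$, and one checks by direct block multiplication that the top-left $\ell \times \ell$ block of ${}^t q J_{2m} q$ is ${}^t A \cdot ({}^t e_1, \dots, {}^t e_\ell) J_{2m} (e_1,\dots,e_\ell) \cdot A$; but the submatrix of $J_{2m}$ on $\sprod{e_1}{\dots,e_\ell}$ vanishes precisely because $\ell \le m$ (the form $J_{2m}$ pairs $e_i$ with $e_{2m+1-i}$, and for $i,j \le \ell \le m$ we have $2m+1-j > m \ge \ell \ge i$). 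Hence ${}^t g J_{2m} g$ is totally isotropic on $\sprod{e_1}{\dots,e_\ell}$.

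For the reverse inclusion, take $g \in G_{2m}$ with $\ii := {}^t g J_{2m} g$ totally isotropic on $\sprod{e_1}{\dots,e_\ell}$. Equivalently, writing $g = (g\,e_1, \dots, g\,e_{2m})$ in terms of its columns, the vectors $v_i := g\,e_i$ for $i \le \ell$ span a totally isotropic subspace $V$ of dimension $\ell$ for the symplectic form $\langle x,y\rangle = {}^t x J_{2m} y$. The goal is to produce $h \in Sp(J_{2m})$ with $h e_i = v_i$ for $i = 1,\dots,\ell$, for then $h^{-1} g$ fixes $e_1,\dots,e_\ell$, i.e. $h^{-1} g$ has first $\ell$ columns equal to $e_1,\dots,e_\ell$, which means $h^{-1} g \in Q$ (its lower-left block in the $(\ell, 2m-\ell)$ partition is $0$). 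The existence of such an $h$ is Witt's theorem for symplectic spaces: any isometry between subspaces of a symplectic space extends to a global isometry; here we extend the linear map $e_i \mapsto v_i$ (from the totally isotropic standard flag piece $\sprod{e_1}{\dots,e_\ell}$, which is isotropic since $\ell \le m$, to the isotropic subspace $V$), which is automatically an isometry because both domains are totally isotropic. This gives $h \in Sp(J_{2m})$ with $g \in hQ$.

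The main technical point to get right is the bookkeeping identifying "$h^{-1}g$ fixes $e_1,\dots,e_\ell$" with membership in $P_{(\ell,2m-\ell)}$, and the invocation of Witt extension in the exact form needed (extending a partial isometry whose domain is the span of the first $\ell$ standard basis vectors). I expect the Witt-extension step to be the conceptual heart, though it is standard; everything else is linear-algebra computation. One should also double-check the non-degeneracy hypothesis is used correctly: $J_{2m}$ is non-degenerate, so $Sp(J_{2m})$ is the full symplectic group and Witt's theorem applies without reservation. If one prefers to avoid citing Witt's theorem, one can instead argue by induction on $\ell$, extending the isotropic flag one vector at a time using that a symplectic form on a space of dimension $2m$ always has enough room to find a hyperbolic partner, but the Witt-theorem formulation is cleanest.
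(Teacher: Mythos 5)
Your proof is correct, and both inclusions go through. The forward inclusion is essentially the same as the paper's. For the reverse inclusion, however, you take a genuinely different route: you invoke Witt's extension theorem for nondegenerate alternating forms — any isometry between subspaces extends to an element of the full symplectic group — applied to the partial isometry $e_i \mapsto g e_i$ ($i \le \ell$) between two totally isotropic subspaces, and you observe that $h^{-1}g$ fixing $e_1,\dots,e_\ell$ columnwise forces $h^{-1}g \in P_{(\ell,2m-\ell)}$. This is clean and conceptual. The paper instead proceeds by hand: it reformulates the reverse inclusion as the problem of finding $q\in Q$ with ${}^tq\,({}^t g J_{2m} g)\,q = J_{2m}$, and then reduces the Gram matrix ${}^tgJ_{2m}g$ (which has a zero $\ell\times\ell$ upper-left block) to $J_{2m}$ by a chain of explicit $Q$-congruences — first using invertibility to normalize the off-diagonal block to $(0,w_\ell)$, then clearing the remaining blocks by block-unipotent congruences, then fixing the middle block by a symplectic change of basis in dimension $2(m-\ell)$. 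The paper's argument is longer but entirely elementary and self-contained (no appeal to Witt's theorem), and incidentally produces an explicit normal form for $Q$-congruence classes, which aligns with the constructive flavor of the rest of Section 6. Your Witt-theorem version is shorter and arguably more transparent, at the cost of citing a theorem the paper otherwise avoids; both are valid over an arbitrary field, as the remark after Proposition 2 requires.
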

\begin{proof}
If $h \in Sp(J_{2m})$ and $q\in Q$ then $ {}^t(hq)J_{2m}hq={}^t
qJ_{2m}q.$ Since $q$ preserves the space
$\sprod{e_1}{\dots,e_{\ell}}$ and since $J_{2m}$ is totally
isotropic on $\sprod{e_1}{\dots,e_{\ell}}$ we get that ${}^t
qJ_{2m}q$ is also totally isotropic on
$\sprod{e_1}{\dots,e_{\ell}}.$ To prove the other direction let $g
\in G_{2m}$ be such that ${}^tg J_{2m} g$ is totally
    isotropic on $\sprod{e_1}{\dots,e_{\ell}}.$ Then
\[
    x=g^{t}J_{2m}g=\left(
           \begin{array}{cc}
           0_\ell & A \\
           -{}^t A & D
        \end{array}
      \right) \in G_{2m}
\]
for some $D=-{}^t D \in M_{2m-\ell \times 2m-\ell}(F).$ We must show
that there exists $q \in Q$ such that ${}^t qxq=J_{2m}.$ Since $x$
is invertible and $\ell \le 2m-\ell$ the matrix $A$ is of rank
$\ell.$ Performing elementary operations, there exists $\alpha \in
G_\ell$ and $\gamma \in G_{2m-\ell}$ such that ${}^t\alpha
A\gamma=(0_{\ell \times 2(m-\ell)},w_\ell).$ It follows that for
$q=\diag(\alpha,\gamma) \in Q,$ ${}^tqxq$ has the form
\[
\left(
  \begin{array}{ccc}
    0 & 0 & w_\ell \\
    0 & a & b \\
    -w_\ell & -{}^t b & d \\
  \end{array}
\right)
\]
where $a=-{}^ta \in G_{2(m-\ell)}$ and $d=-{}^t d \in M_{\ell \times
\ell}(F).$ Write $\beta=(\beta_1,\beta_2)$ with $\beta_1\in M_{\ell
\times 2(m-\ell)}(F)$ and $\beta_2 \in M_{\ell \times \ell}(F).$
Note that
\begin{multline*}
    \left(
           \begin{array}{ccc}
           I_{\ell} & 0 & 0\\
           {}^t \beta_1 & I_{2(m-\ell)} & 0 \\
           {}^t \beta_2 & 0 & I_\ell
        \end{array}
      \right)\left(
  \begin{array}{ccc}
    0 & 0 & w_\ell \\
    0 & a & b \\
    -w_\ell & -{}^t b & d \\
  \end{array}
\right)\left(
           \begin{array}{ccc}
           I_{\ell} & \beta_1 & \beta_2 \\
            0 & I_{2(m-\ell)} & 0 \\
            0 & 0 & I_\ell
        \end{array}
      \right)\\=\left(
  \begin{array}{ccc}
    0 & 0 & w_\ell \\
    0 & a & b+{}^t\beta_1w_\ell \\
    -w_\ell & -{}^t b-w_\ell\beta_1 & d+{}^t\beta_2w_\ell-w_\ell\beta_2 \\
  \end{array}
\right).
\end{multline*}
We may now take $\beta_1=-w_\ell {}^t b.$ Any skew symmetric matrix
in $M_{\ell \times \ell}(F)$ can be written as a difference
$X-{}^tX$ for some $X \in M_{\ell \times \ell}(F).$ Thus, there also
exists $\beta_2$ such that ${}^t\beta_2w_\ell-w_\ell\beta_2=-d.$ We
get that there exists $q \in Q$ such that
\[
    {}^t q x q=\left(
  \begin{array}{ccc}
    0 & 0 & w_\ell \\
    0 & a & 0 \\
    -w_\ell & 0 & 0 \\
  \end{array}
\right).
\]
Let $y \in G_{2(m-\ell)}$ be such that ${}^ty a y=J_{2(m-\ell)}.$
Thus $q'=q\diag(I_\ell,y,I_\ell) \in Q$ and ${}^t q'xq'=J_{2m}.$
\end{proof}
For $x \in G_\ell$ let
\[
\tilde x=w_\ell  \,x^\tau \,w_\ell.
\]
The following property of the group $Sp(J_{2m})$ will be used
several times in the proof of $\prop(g,r,r').$ Assume that $\ell \le
m.$
\begin{multline}\label{eq: symp property}
\text{For all } \,x \in G_\ell,\,s \in Sp(J_{2(m-\ell)})\text{ and
}y \text{ there exists } \,y^*  \text{ uniquely determined}\\
\text{by }x,\,s \text{ and }y \text{ and dependent linearly on
}y\text{ and there exists } z \text{ such that}\\
    \left(
      \begin{array}{ccc}
        x & y^* & z \\
        0 & s & y \\
        0 & 0 & \tilde x \\
      \end{array}
    \right) \text{ (resp.} \left(
      \begin{array}{ccc}
        x & 0 & 0 \\
        y^* & s & 0 \\
        z & y & \tilde x \\
      \end{array}
    \right))
     \text{ lies in } Sp(J_{2m}).
\end{multline}
We now choose a convenient representative for $g.$
\begin{lemma}\label{lemma: good representative}
Let
\[
g=\left(
      \begin{array}{cc}
        I_r &  \\
         & g_1 \\
      \end{array}
    \right)w\left(
      \begin{array}{cc}
        I_{r'} &  \\
         & g_2 \\
      \end{array}
    \right) \in MwM'
\]
belong to the totally isotropic case. Then there exists $\gamma \in
G_{n-(r+r')}$ such that
\[
    \left(
      \begin{array}{ccc}
        0 & I_r & 0 \\
        I_{r'} & 0 & 0 \\
        0 & 0 & \gamma \\
      \end{array}
    \right) \in \hh \cdot g.
\]
\end{lemma}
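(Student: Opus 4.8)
The plan is to use the total‑isotropy hypothesis, through Lemma \ref{lemma: max parabolic orbit}, to replace $g_1$ and $g_2$ by representatives lying in maximal parabolic subgroups, and then to clean up the resulting matrix block by block under the action of $\hh$.

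First, since $\ii_2$ is a nondegenerate alternating form on $M_{2k'\times 1}(F)$ that is totally isotropic on the $r$‑dimensional subspace $\sprod{e_1}{\dots,e_r}$, we must have $r\le k'$; symmetrically $r'\le k$, so Lemma \ref{lemma: max parabolic orbit} is available in both $G_{2k}$ and $G_{2k'}$. Writing $\ii_2={}^t(g_2^{-1})J_{2k'}(g_2^{-1})$ and $-\ii_1^{-1}={}^t(g_1^\tau)J_{2k}(g_1^\tau)$, and using that total isotropy is insensitive to scalar multiplication, Lemma \ref{lemma: max parabolic orbit} gives $g_2\in P_{(r,2k'-r)}\,Sp(J_{2k'})$ and $g_1\in Sp(J_{2k})\,\overline P_{(r',2k-r')}$. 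Since $\diag(I_r,Sp(J_{2k}))\subseteq H_{r,2k}$, $\diag(I_{r'},Sp(J_{2k'}))\subseteq H_{r',2k'}$ and $Sp(\cdot)$ is stable under transpose, the symplectic factors get absorbed into the left and right actions of $\hh$; hence $\hh\cdot g=\hh\cdot g_0$ where $g_0=\diag(I_r,p_1)\,w\,\diag(I_{r'},p_2)$ for some $p_1\in\overline P_{(r',2k-r')}$ and $p_2\in P_{(r,2k'-r)}$, and it remains to find the required representative in $\hh\cdot g_0$. In the block form where $G$ has rows grouped into sizes $(r,r',n-r-r')$ and columns into sizes $(r',r,n-r-r')$ (note $2k-r'=2k'-r=n-r-r'$, and $w$ is block‑antidiagonal on the first two groups), a direct computation gives
\[
    g_0=\begin{pmatrix} 0 & a & b \\ \alpha & 0 & 0 \\ \beta & 0 & c \end{pmatrix},
\]
where $a\in G_r$, $\alpha\in G_{r'}$, $c\in G_{n-r-r'}$ are invertible and $b,\beta$ are arbitrary; the vanishing blocks reflect $p_1$ being lower and $p_2$ upper block triangular.

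Next I would normalise $\alpha$ and $a$ to the identity using the symplectic slots of $H_{r,2k}$ and $H_{r',2k'}$. By the Levi embedding (the $y=z=0$, $s=I$ instance of \eqref{eq: symp property}) one has $\diag(\alpha^{-1},I_{2(k-r')},\widetilde{\alpha^{-1}})\in Sp(J_{2k})$, so $\diag\!\bigl(I_r,\diag(\alpha^{-1},I,\widetilde{\alpha^{-1}})\bigr)\in H_{r,2k}$; left multiplying $g_0$ by it replaces $\alpha$ with $I_{r'}$, changing only $\beta$ and $c$ and keeping $c$ invertible. In the same way $\diag({}^t a^{-1},I_{2(k'-r)},\widetilde{{}^t a^{-1}})\in Sp(J_{2k'})$, and multiplying $g_0$ on the right by the transpose of $\diag\!\bigl(I_{r'},\diag({}^t a^{-1},I,\widetilde{{}^t a^{-1}})\bigr)\in H_{r',2k'}$ replaces $a$ with $I_r$, changing only $b$ and $c$. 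We are thus reduced to $g_0$ of the displayed shape with $a=I_r$ and $\alpha=I_{r'}$.

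Finally the two remaining off‑diagonal blocks are killed by elements of $H_{r,2k}$ and $H_{r',2k'}$ with trivial unipotent and symplectic parts: left multiplication by
\[
\begin{pmatrix} I_r & bc^{-1}\beta & -bc^{-1} \\ 0 & I_{r'} & 0 \\ 0 & 0 & I \end{pmatrix}\in H_{r,2k}
\]
removes the $(1,1)$‑ and $(1,3)$‑blocks, and then right multiplication by the transpose of
\[
\begin{pmatrix} I_{r'} & 0 & -{}^t\beta\,{}^t c^{-1} \\ 0 & I_r & 0 \\ 0 & 0 & I \end{pmatrix}\in H_{r',2k'}
\]
removes the $(3,1)$‑block; the result is $\left(\begin{smallmatrix} 0 & I_r & 0 \\ I_{r'} & 0 & 0 \\ 0 & 0 & c \end{smallmatrix}\right)$, so we may take $\gamma=c$. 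I expect the only real difficulty to be bookkeeping: carrying the three (and, whenever \eqref{eq: symp property} enters, five) block sizes at once, checking that each auxiliary matrix genuinely lies in $H_{r,2k}$ or $H_{r',2k'}$, and in particular verifying that the symplectic Levi elements $\diag(x,I,\tilde x)$ preserve the specific antidiagonal form $J_{2k}$ (equivalently that $\tilde x=w_\ell x^\tau w_\ell$), and that the two normalising steps do not interfere.
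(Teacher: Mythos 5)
Your proof is correct and follows essentially the same route as the paper: apply Lemma \ref{lemma: max parabolic orbit} to $g_1^\tau$ and $g_2^{-1}$ to get $g_1\in Sp(J_{2k})\overline P_{(r',2k-r')}$ and $g_2\in P_{(r,2k'-r)}Sp(J_{2k'})$, absorb the symplectic factors into $\hh$, normalise the invertible off-diagonal blocks $a,\alpha$ to the identity via Levi elements of the two symplectic groups, and then kill the remaining $b$ and $\beta$ blocks by unipotent elements of $H_{r,2k}$ and $H_{r',2k'}$; the explicit cleaning matrices you write down are exactly those in the paper's proof. Your observation that the total-isotropy hypothesis forces $r\le k'$ and $r'\le k$ (so that Lemma \ref{lemma: max parabolic orbit} applies) is a small point the paper leaves implicit, and is worth making.
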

\begin{proof}
Since $-\ii_1^{-1}=g_1^{-1}J_{2k} g_1^\tau$ is totally isotropic on
$\sprod{e_1}{\dots,e_{r'}}$ and $\ii_2=g_2^{\tau}J_{2k} g_2^{-1}$ is
totally isotropic on $\sprod{e_1}{\dots,e_{r}}$, it follows from
Lemma \ref{lemma: max parabolic orbit} that
\[
g_1 \in Sp(J_{2k})\left(
           \begin{array}{cc}
           \alpha_1 & 0 \\
           \beta_1' & \gamma_1
        \end{array}
      \right)\text{ and }g_2 \in  \left(
           \begin{array}{cc}
           \alpha_2 & \beta_2' \\
           0 & \gamma_2
        \end{array}
      \right)Sp(J_{2k'})
\]
for some $\alpha_1 \in G_{r'},\,\gamma_1 \in G_{2k-r'},\,\alpha_2
\in G_{r},\,\gamma_2 \in G_{2k'-r}$ and $\beta_1'$ and $\beta_2'$ of
the appropriate size. Therefore,
\begin{multline*}
    \left(
      \begin{array}{ccc}
        0 & \alpha_2 & \beta_2' \\
        \alpha_1 & 0 & 0 \\
        \beta_1' & 0 & \gamma_1\gamma_2 \\
      \end{array}
    \right)=\\
    \left(
      \begin{array}{ccc}
        I_r & 0 & 0 \\
        0 & \alpha_1 & 0 \\
        0 & \beta_1' &  \gamma_1  \\
      \end{array}
    \right)\left(
      \begin{array}{ccc}
        0 & I_r & 0 \\
        I_{r'} & 0 & 0 \\
        0 & 0 & I_{n-(r+r')} \\
      \end{array}
    \right)\left(
      \begin{array}{ccc}
        I_{r'} & 0 & 0 \\
        0 & \alpha_2 & \beta_2' \\
        0 & 0 & \gamma_2 \\
      \end{array}
    \right)\in \hh\cdot g.
\end{multline*}
Note that $\diag(\alpha_1,I_{2(k-r')},\tilde\alpha_1) \in
Sp(J_{2k})$ and $\diag(\alpha_2,I_{2(k'-r)},\tilde\alpha_2) \in
Sp(J_{2k'})$ and therefore that
\[
    h=\diag(I_r,\alpha_1^{-1},I_{2(k-r')},\tilde\alpha_1^{-1}) \in H
    \text{ and }
h'=\diag(I_{r'},\alpha_2^{-1},I_{2(k'-r)},\tilde\alpha_2^{-1}) \in
\overline{H'}.
\]
Thus,
\[
h\left(
      \begin{array}{ccc}
        0 & \alpha_2 & \beta_2' \\
        \alpha_1 & 0 & 0 \\
        \beta_1' & 0 & \gamma_1\gamma_2 \\
      \end{array}
    \right)h'=\left(
      \begin{array}{ccc}
        0 & I_r & \beta_2 \\
        I_{r'} & 0 & 0 \\
        \beta_1 & 0 & \gamma \\
      \end{array}
    \right) \in \hh \cdot g
\]
for some $\gamma \in G_{n-(r+r')},\,\beta_1$ and $\beta_2.$ Now note
that
\begin{multline*}
    \left(
      \begin{array}{ccc}
        I_r & \beta_2\gamma^{-1}\beta_1 & -\beta_2\gamma^{-1} \\
        0 & I_{r'} & 0 \\
        0 & 0 & I_{n-(r+r')} \\
      \end{array}
    \right)\left(
      \begin{array}{ccc}
        0 & I_r & \beta_2 \\
        I_{r'} & 0 & 0 \\
        \beta_1 & 0 & \gamma \\
      \end{array}
    \right)\left(
      \begin{array}{ccc}
        I_{r'} & 0 & 0 \\
        0 & I_r & 0 \\
        -\gamma^{-1}\beta_1 & 0 & I_{n-(r+r')} \\
      \end{array}
    \right)\\=\left(
      \begin{array}{ccc}
        0 & I_r & 0 \\
        I_{r'} & 0 & 0 \\
        0 & 0 & \gamma \\
      \end{array}
    \right) \in \hh \cdot g.
\end{multline*}
\end{proof}
\begin{lemma}\label{lemma: gamma form prop p}
Let $\gamma \in G_{n-(r+r')}$ and let
\[
    g=\left(
      \begin{array}{ccc}
        0 & I_r & 0 \\
        I_{r'} & 0 & 0 \\
        0 & 0 & \gamma \\
      \end{array}
    \right)
\]
then $\prop(g,r,r').$
\end{lemma}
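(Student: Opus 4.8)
The plan is to prove $\prop(g,r,r')$ by induction on $n$, reducing it to the analogous property for the smaller matrix $\gamma$. First note that here $m:=n-r-r'$ satisfies $0<m<n$: it is $<n$ because $r+r'>0$, and the totally isotropic hypothesis gives $r\le k'$ and $r'\le k$, i.e. $2r+r'\le n$ and $r+2r'\le n$, so $r,r'\le m$; moreover $m-r=n-2r-r'$ and $m-r'=n-r-2r'$ are even. Hence $\prop_m(\gamma,r,r')$ is a legitimate instance of the induction hypothesis, and establishing it suffices. By Lemma \ref{lemma: reduce to representative+transpose}, since ${}^tg$ has the same shape (with $r,r',\gamma$ replaced by $r',r,{}^t\gamma$), we may assume $r\ge r'$; as $r\equiv r'\pmod 2$, only the cases $r\ge 2$ and $r=r'=1$ remain.

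Suppose $r\ge 2$. We look for $y$ witnessing \eqref{eq: non triv char}. Write $y\in H_{r,2k}$ in block sizes $(r,r',m)$ with unipotent part $u\in U_r$ and symplectic part $h=\bigl(\begin{smallmatrix}h_{11}&h_{12}\\h_{21}&h_{22}\end{smallmatrix}\bigr)\in Sp(J_{2k})$ in block sizes $(r',m)$ (legitimate since $r'\le k$). A direct block computation gives
\[
g^{-1}yg=\left(\begin{array}{ccc}h_{11}&0&h_{12}\gamma\\ \ast&u&\ast\\ \gamma^{-1}h_{21}&0&\gamma^{-1}h_{22}\gamma\end{array}\right)
\]
in block sizes $(r',r,m)$, so $g^{-1}yg\in\overline H_{r',2k'}$ precisely when $h_{12}=0$, $h_{11}\in\overline U_{r'}$, and the bottom-right $2k'$-block $\bigl(\begin{smallmatrix}u&\ast\\ \gamma^{-1}h_{21}&\gamma^{-1}h_{22}\gamma\end{smallmatrix}\bigr)$ lies in $Sp(J_{2k'})$; in that case $\theta^{r,r'}(y,g^{-1}yg)=\psi_r(u)\,\psi_{r'}({}^th_{11}^{-1})$. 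Now we apply \eqref{eq: symp property} twice. Since $r'\le k$, the vanishing $h_{12}=0$ forces $h$ into the parabolic of $Sp(J_{2k})$ stabilising $\langle e_{m+1},\dots,e_{2k}\rangle$, so $h_{11}$ is unconstrained in $\overline U_{r'}$ and $h_{22}$ has ``reversed'' symplectic form, with $Sp(J_{2(k-r')})$ in its top-left corner and $w_{r'}h_{11}^\tau w_{r'}$ in its bottom-right corner. Using $r\le k'$, after choosing the free entries of $y$ suitably, the remaining membership forces $\gamma^{-1}h_{22}\gamma$ into the corresponding parabolic of $Sp(J_{2k'})$ relative to $\langle e_{m+1},\dots,e_{2k'}\rangle$, i.e. to be of ``reversed'' symplectic form with $Sp(J_{2(k'-r)})$ in its top-left corner and $w_ru^\tau w_r$ in its bottom-right corner.

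Conjugating these two reversed patterns back to standard $H$- and $\overline H$-shape by the relevant block permutations (together with $w_{r'}$- and $w_r$-twists) identifies the residual condition --- ``there exists such an $h_{22}$'' --- with \eqref{eq: non triv char} for $(\tilde\gamma,r,r')$, where $2(k-r')=m-r'$, $2(k'-r)=m-r$, and $\tilde\gamma\in G_m$ is $\gamma$ pre- and post-multiplied by fixed block-permutation matrices, and where the character value $\psi_r(u)\psi_{r'}({}^th_{11}^{-1})$ matches the corresponding value of $\theta^{r,r'}$ on the $\gamma$-side. Thus $\prop_n(g,r,r')$ follows from $\prop_m(\tilde\gamma,r,r')$ (equivalently, via Lemma \ref{lemma: reduce to representative+transpose} and $\theta\circ\xi=\theta$, from $\prop_m(\gamma,r,r')$), which is supplied by the induction hypothesis. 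The case $r=r'=1$ runs identically except that $\psi_1$ is trivial, so one verifies \eqref{eq: transpose inv orbit} instead (its character requirement being vacuous) and reduces it to the transpose condition for $\gamma$, i.e. to $\prop_{n-2}(\gamma,1,1)$, which holds by Lemma \ref{lemma: whittaker case} when $n-2=1$ and by the induction hypothesis otherwise.

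The delicate point is this last identification: one must verify that stripping off the outer $U_r$- and $Sp(J_{2k})$-layers via \eqref{eq: symp property} really leaves an instance of $\prop_m$ for $\gamma$ --- lining up the two symplectic forms $J_{2(k-r')}$ and $J_{2(k'-r)}$, the direction of the inclusion ($H$ into $\overline H$), and, above all, carrying a \emph{nontrivial} value of $\theta^{r,r'}$ through all the transposes and $w$-conjugations so that nontriviality on the $\gamma$-side yields nontriviality on the $g$-side. Everything else is routine block algebra together with \eqref{eq: symp property} and Lemmas \ref{lemma: whittaker case} and \ref{lemma: symplectic case}.
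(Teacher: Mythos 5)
Your overall strategy is the same as the paper's: by a block computation and \eqref{eq: symp property}, reduce $\prop_n(g,r,r')$ to $\prop_{m}(\cdot,r,r')$ for $m=n-(r+r')$ and a matrix built from $\gamma$ by block permutations (the paper's $x=\sigma_1^{-1}\gamma\sigma_2$), and then invoke the induction hypothesis. Your preliminary checks ($0<m<n$, $r\le k'$, $r'\le k$, parity) are correct, and your computation of $g^{-1}yg$ and of the resulting membership constraints is right.

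However, your case split is based on the wrong dichotomy, and this leaves a genuine gap. You split according to whether $\psi_r$ is nontrivial ($r\ge 2$ versus $r=r'=1$) and, for $r\ge 2$, seek only a witness for \eqref{eq: non triv char}. But the induction hypothesis $\prop_m(\tilde\gamma,r,r')$ is a disjunction: when $r=r'$ it might hold only via \eqref{eq: transpose inv orbit}, even if $r\ge 2$. In that situation your identification produces a witness $y$ on the $\gamma$-side for which the character value on the $g$-side is forced to be $1$, so \eqref{eq: non triv char} fails for $g$, and your $r\ge 2$ argument as written does not conclude. The correct split is according to which clause of $\prop_m$ the inductive witness satisfies; the paper handles this uniformly with the $z^\star$-notation (replacing $\gamma$ by ${}^t\gamma$ and $g$ by ${}^tg$ in the ``transpose'' case), pushing a witness for \eqref{eq: transpose inv orbit} on the $\gamma$-side through the same block algebra to a witness for \eqref{eq: transpose inv orbit} on the $g$-side. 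Your $r=r'=1$ subcase is fine precisely because there $\psi_1$ trivial forces the transpose clause on both sides; but for $r=r'\ge 2$ you must carry through both possibilities, not only \eqref{eq: non triv char}. A secondary imprecision: the parenthetical assertion that $\prop_m(\tilde\gamma,r,r')$ is ``equivalently'' $\prop_m(\gamma,r,r')$ via Lemma \ref{lemma: reduce to representative+transpose} is not justified, since $\sigma_1,\sigma_2$ are not in $\widetilde\hh^{r,r'}_m$; it is harmless because the induction hypothesis covers all of $G_m$, but the word ``equivalently'' should be dropped.
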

\begin{proof}
Recall that $r+r'>0.$ Let
\[
    \sigma_1=\left(
      \begin{array}{cc}
          & I_{2(k-r')} \\
         w_{r'} &  \\
      \end{array}
    \right) \text{ and }\sigma_2=\left(
      \begin{array}{cc}
          & I_{2(k'-r)} \\
        w_r &  \\
      \end{array}
    \right).
\]
For $x=\sigma_1^{-1}\gamma \sigma_2$ we have by the induction
hypothesis $\prop_{n-(r+r')}(x,r,r').$ Fix $y \in H_{r,2(k'-r)}$
such that either
\begin{equation}\label{eq: char not one}
x^{-1}yx \in \overline H_{r',2(k-r')}\text{ and }
\theta(y,x^{-1}yx)\ne 1
\end{equation}
or
\begin{equation}\label{eq: trans inv orb}
r=r',\, x^{-1} y \ {}^tx \in \overline H_{r',2(k-r')}\text{ and }
\theta(y,x^{-1}y\ {}^tx)= 1.
\end{equation}
For every invertible matrix $z$ denote by $z^\star$ the matrix $z$
if $y$ satisfies \eqref{eq: char not one} and the matrix ${}^t z$
otherwise. Note that if \eqref{eq: trans inv orb} holds then
$\sigma_1=\sigma_2$ and therefore in either case we have
\[
    x^\star=\sigma_1^{-1}\gamma^\star \sigma_2.
\]
There exist $s' \in Sp(J_{2(k-r')}),\,u' \in U_{r'}$ and $\varrho'
\in M_{r' \times 2(k-r')}(F)$ such that
\[
    \sigma_1 y\sigma_1^{-1}=
    \left(
      \begin{array}{cc}
          s' &  \\
        \varrho' &  {}^t(\tilde {u'})\\
      \end{array}
    \right)
\]
and there exist $s \in Sp(J_{2(k'-r)}),\,u \in U_{r}$ and $\varrho
\in M_{2(k'-r) \times r }(F)$ such that
\[
    \gamma^{-1}\sigma_1 y\sigma_1^{-1}\gamma^\star=
    \sigma_2x^{-1}yx^\star \sigma_2^{-1}=\left(
      \begin{array}{cc}
          s &  \varrho\\
         &  \tilde u\\
      \end{array}
    \right).
\]
Note then that
\begin{equation}\label{eq: value of char}
\theta(y,x^{-1}yx^\star)=\psi_r(u) \psi_{r'}(u')^{-1}.
\end{equation}
By \eqref{eq: symp property} there exist $(\varrho')^*\in M_{2(k-r')
\times r'}(F),\,\varrho^* \in M_{r \times 2(k'-r) }(F),z'$ and $z$
such that
\[
    h=\left(
  \begin{array}{ccc}
    {}^t u' & 0 & 0 \\
    (\varrho')^* & s' & 0 \\
    z' & \varrho' & {}^t \tilde u' \\
  \end{array}
\right)\in Sp(J_{2k}) \text{ and }h'=\left(
  \begin{array}{ccc}
    u & \varrho^* & z \\
    0 & s & \varrho \\
    0 & 0 &  \tilde u \\
  \end{array}
\right) \in Sp(J_{2k'}).
\]
Note that
\[
    g^\star=\left(
  \begin{array}{ccc}
    0 & I_r & 0 \\
    I_{r'} & 0 & 0 \\
    0 & 0 &  \gamma^\star \\
  \end{array}
\right).
\]
Let
\[
    \zeta_1=(\varrho^*,z)(\gamma^\star)^{-1} \text{ and }\zeta=(0_{r \times
    r'},\zeta_1)
\]
then
\[
    Y=\left(
  \begin{array}{cc}
    u &  \zeta \\
    0 & h
  \end{array}
\right) \in H,\,g^{-1} Yg^\star=\left(
  \begin{array}{cc}
    {}^t u' &  0 \\
    \zeta' & h'
  \end{array}
\right) \in \overline{H'}\text{ where
}\zeta'=\gamma^{-1}\left(\begin{array}{c} (\varrho')^* \\
z'\end{array}\right)
\]
and $\theta(Y,g^{-1} Yg^\star)=\psi_r(u)\psi_{r'}(u')^{-1}.$ The
property $\prop_n(g,r,r')$ therefore follows from \eqref{eq: value
of char} and the fact that either \eqref{eq: char not one} holds or
\eqref{eq: trans inv orb} holds.
\end{proof}

\subsection{Conclusion}
For $g \in G$, by \eqref{eq: bruhat decomp} there exists $w \in
{}_MW_{M'}$ such that $g \in Pw\overline{P'}.$ If $I_w$ is not empty
then $\prop(g,r,r')$ is proved in Lemma \ref{lemma: induction step}.
If $I_w$ is empty then we separated in \S\ref{sec: closed cell} the
statement $\prop(g,r,r')$ into 2 cases. If $g$ belongs to the
totally isotropic case then $\prop(g,r,r')$ follows from Lemma
\ref{lemma: reduce to representative+transpose}, Lemma \ref{lemma:
good representative} and Lemma \ref{lemma: gamma form prop p}.
Otherwise $\prop(g,r,r')$ follows from Lemma \ref{lemma: q equiv}
and Lemma \ref{lemma: not totally isotropic}. It follows that for
every $g \in G$ we have $\prop(g,r,r').$ Proposition \ref{prop: main
double cosets} now follows from Lemma \ref{lemma: reformulate prop}.
Therefore, Proposition \ref{prop: dist reduction} follows from
\S\ref{subsec: prop2 to prop1} and Theorem \ref{thm:uniq-disj}
follows from \S\ref{subsec: prop to thm}.

\end{document}